\numberwithin{equation}{section}
\newtheorem{proposition}{Proposition}[section]
\newtheorem{theorem}[proposition]{Theorem}
\newtheorem{lemma}[proposition]{Lemma}
\theoremstyle{definition}
\newcommand{\defeq}{\coloneqq}
\newcommand{\Nset}{\mathbb{N}}
\newcommand{\Zset}{\mathbb{Z}}
\newcommand{\Rset}{\mathbb{R}}
\newcommand{\Sset}{\mathbb{S}}
\newcommand{\Bset}{\mathbb{B}}
\newcommand{\dif}{\,\mathrm{d}}
\newcommand{\compose}{\,\circ\,}
\newcommand{\manifold}[1]{\mathcal{#1}}
\newcommand{\lifting}[1]{\smash{\widetilde{#1}}}
\DeclarePairedDelimiter{\brk}{(}{)}
\DeclarePairedDelimiter{\abs}{\lvert}{\rvert}
\DeclarePairedDelimiter{\seminorm}{\lvert}{\rvert}
\DeclarePairedDelimiter{\floor}{\lfloor}{\rfloor}
\DeclarePairedDelimiterX{\intvc}[2]{[}{]}{#1,#2}
\DeclarePairedDelimiterX{\intvl}[2]{(}{]}{#1,#2}
\DeclarePairedDelimiterX{\intvr}[2]{[}{)}{#1,#2}
\DeclarePairedDelimiterX{\intvo}[2]{(}{)}{#1,#2}
\DeclarePairedDelimiterX{\setcond}[2]{\{}{\}}{#1 \,\delimsize\vert\, #2}
\newcommand{\VMO}{\mathrm{VMO}}
\newcommand{\restr}[1]{\vert_{#1}}
\newcommand{\Deriv}{\mathrm{D}}
\DeclareMathOperator{\id}{id}
\newcommand\stSymbol[1][]{%
\nonscript\;#1\vert
\allowbreak
\nonscript\;
\mathopen{}}
\DeclarePairedDelimiterX\set[1]\{\}{%
\renewcommand\st{\stSymbol[\delimsize]}
#1
}
\providecommand{\st}{\stSymbol}
\DeclareMathOperator{\dist}{dist}
\DeclareMathOperator{\tr}{tr}
\newcommand{\sobolev}{\smash{\dot{W}}\vphantom{W}}
\newcommand{\energy}{\mathfrak{E}}
\renewcommand{\PrintDOI}[1]{%
  \href{http://dx.doi.org/#1}{doi:#1}%
}
\title
{%
On the local character of the extension of traces for Sobolev mappings
}
\author{Jean Van Schaftingen}
\keywords{Extension of traces in Sobolev spaces;
trace theory;
Sobolev-Slobodecki\u{\i} spaces; linear estimates}
\subjclass[2020]{58D15 (46E35, 46T10, 58C25, 58J32)}
\newcommand{\sectref}[1]{\hyperref[#1]{\S \ref*{#1}}}
\begin{document}

\address{
Universit\'e catholique de Louvain, Institut de Recherche en Math\'ematique et Physique, Chemin du Cyclotron 2 bte L7.01.01, 1348 Louvain-la-Neuve, Belgium}

\email{Jean.VanSchaftingen@UCLouvain.be}

\begin{abstract}
We prove that a mapping $u \colon \mathcal{M}'\to \mathcal{N}$, where $\mathcal{M}'$ and $ \mathcal{N}$ are compact Riemannian manifolds,
is the trace of a Sobolev mapping $U \colon  \mathcal{M}' \times [0, 1) \to \mathcal{N}$
if and only if it is on some open covering of $\smash{\manifold{M}'}$.
In the global case where $\mathcal{M}$ is a compact Riemannian manifold with boundary, this implies that the analytical obstructions to the extension of a mapping $u \colon \partial \mathcal{M}\to \mathcal{N}$ to some Sobolev mapping $U \colon  \mathcal{M} \to \mathcal{N}$ are purely local.
\end{abstract}

\thanks{The author was supported by the Projet de Recherche T.0229.21 ``Singular Harmonic Maps and Asymptotics of Ginzburg--Landau Relaxations'' of the Fonds de la Recherche Scientifique--FNRS.}

\maketitle

\setcounter{tocdepth}{1}
% \tableofcontents
\setcounter{tocdepth}{5}

\section{Introduction}

\subsection{Sobolev mappings, their traces and the extensions of the latter}

Given \(1 \le p < +\infty\), the \emph{first-order homogeneous Sobolev space of mappings} is defined as
\begin{equation*}
 \sobolev^{1, p} \brk{\manifold{M}, \mathcal{N}}
 \defeq
 \set[\Big]{U\colon \manifold{M} \to \mathcal{N} \st U \text{ is weakly differentiable and } \int_{\manifold{M}} \abs{\Deriv U}^p < +\infty },
\end{equation*}
where \(\manifold{M}\) is a Riemannian manifold with  boundary and \(\manifold{N}\) is a Riemnnian manifold.
In view of the Nash isometric embedding theorem \cite{Nash_1956}, we can assume without loss of generality that \(\manifold{N}\) is isometrically embedded into \(\Rset^\nu\); the weak differentiability of the mapping \(U\) is understood as a function from \(\manifold{M}\) to \(\Rset^\nu\), through local charts on \(\manifold{M}\);
the measure and the norm are induced by their respective Riemannian metrics.
Sobolev spaces of mappings arise naturally in calculus of variations and partial differential equations in \emph{physical models with nonlinear order parameters} \citelist{\cite{Mermin1979}\cite{Ball_Zarnescu_2011}}, in \emph{Cosserat models in elasticity} \citelist{\cite{Ericksen_Truesdell_1958}}, in the study of \emph{harmonic maps} in geometry \citelist{\cite{Helein_Wood_2008}\cite{Eells_Lemaire_1978}} and in the description of attitudes of geometrical elements
in \emph{computer graphics and meshing} \cite{Huang_Tong_Wei_Bao_2011}.

The Sobolev space \(\sobolev^{1, p} \brk{\manifold{M}, \mathcal{N}}\) is naturally endowed with the metric space structure of its linear counterpart \(\sobolev^{1, p} \brk{\manifold{M}, \Rset^\nu}\) as a subspace of the latter,
from which they inherit many features such as the completeness, Fubini-like desintegration and reintegration properties, the Sobolev embeddings and the chain rule.

\medbreak

As Sobolev spaces of mappings are not linear spaces, the more linear constructions of the linear theory fail for them. For instance, they are \emph{not linear spaces}, and they can even exhibit a rich \emph{structure of homotopy classes} provided the domain and the target display enough topological complexity \citelist{\cite{Brezis_Li_Mironescu_Nirenberg_1999}\cite{Hang_Lin_2003_II}\cite{Brezis_Li_2001}}.

Another delicate question for Sobolev mappings is their \emph{approximation by smooth maps,} as the classical approach of approximating by averaging or convolution is incompatible with the manifold constraint.
Even though when \(p \ge \dim \manifold{M}\), Sobolev mappings in \(\sobolev^{1, p} (\manifold{M}, \mathcal{N})\) are essentially continuous and thus can be approximated by smooth mappings \citelist{\cite{Schoen_Uhlenbeck_1982}*{\S 3}\cite{Schoen_Uhlenbeck_1983}*{\S 4}}, this is not anymore the case when \(p < m\) and when the homotopy group \(\pi_{\floor{p}} (\mathcal{N})\) is nontrivial, where \(\floor{t} \in \Zset\) is the integer part of the real number \(t \in \Rset\) \citelist{\cite{Bethuel_1991}\cite{Hardt_Lin_1987}}.

\medbreak

Yet another subtle question concerns the \emph{theory of traces,} which is the object of the present work.
The classical trace theory for linear Sobolev spaces states that if \(\manifold{M}\) is a Riemannian manifold with compact boundary \(\partial \manifold{M}\) and if \(p \in \intvo{1}{\infty}\),
 then there is a well-defined continuous and surjective trace operator
 \[
  \tr_{\partial \manifold{M}} \colon 
  \sobolev^{1, p} \brk{\manifold{M}, \Rset^\nu}
  \to  \sobolev^{1-1/p, p} \brk{\partial \manifold{M}, \Rset^\nu}
 \]
 such that \(\tr_{\partial \manifold{M}} U = U \restr{\partial \manifold{M}}\) when \(U \in C \brk{\manifold{M}, \Rset^\nu} \cap \sobolev^{1, p} \brk{\manifold{M}, \Rset^\nu}\) \cite{Gagliardo_1957}.
The range of the trace operator is a \emph{fractional Sobolev-Slobodeckiĭ} space; given \(0 \in \intvo{0}{1}\), \(p \in \intvr{1}{\infty}\) and a \(d\)-dimensional Riemannian manifold \(\manifold{W}\), these spaces are defined as
\begin{equation}
\label{eq_ooJubaebeeh4xae8eoPh0tei}
  \sobolev^{s, p} \brk{\manifold{W}, \Rset^\nu}
  \defeq 
  \set[\Big]{u \colon \manifold{W} \to \Rset^\nu \st \energy^{s, p} \brk{u}  < \infty},
\end{equation}
in terms of the Gagliardo energy
\begin{equation}
\energy^{s, p} \brk{u, \manifold{W}} \defeq \iint\limits_{\manifold{W}\times \manifold{W}} \frac{\abs{u \brk{x} - u \brk{y}}^p}{d \brk{x, y}^{sp + d}} \dif x \dif y.
\end{equation}

A straightforward transfer of the trace theory from linear to nonlinear Sobolev spaces merely shows the well-definiteness of the trace operator, the inclusion
\begin{equation}
\label{eq_iocief3mohfae0aiph4aeSha}
  \tr_{\partial \manifold{M}} \brk{
  \sobolev^{1, p} \brk{\manifold{M}, \manifold{N}}}
  \subseteq \sobolev^{1-1/p, p} \brk{\partial \manifold{M}, \manifold{N}},
 \end{equation}
and the estimate,
for every \(u \in  \sobolev^{1-1/p, p} \brk{\partial \manifold{M}, \manifold{N}}\),
\begin{equation}
\label{eq_cheegai3she5uZahwookieTh}
\energy^{1 - 1/p, p} \brk{u, \partial \manifold{M}}
\le C
  \energy^{1, p}_{\mathrm{ext}} \brk{u, \partial \manifold{M}, \manifold{M}}
\end{equation}
where the \emph{Sobolev extension energy} of a Borel-measurable map \(u \colon \partial \manifold{M} \to \manifold{N}\) is defined as 
\[
  \energy^{1, p}_{\mathrm{ext}}
  \brk{u, \partial \manifold{M}, \manifold{M}}
  \defeq 
  \inf \set[\Big]{\int_{\manifold{M}}
  \abs{\Deriv U}^p
  \st U \in \sobolev^{1, p} \brk{\manifold{M}, \manifold{N}} \text{ and }\tr_{\partial \manifold{M}} U= u}.
\]
 The reverse inclusion to \eqref{eq_iocief3mohfae0aiph4aeSha} and inequality to \eqref{eq_cheegai3she5uZahwookieTh} are much more delicate, as
there is no reason for the extension \(U \in  \sobolev^{1, p} \brk{\manifold{M}, \Rset^\nu}\) constructed by  the classical linear theory from \(u \in \sobolev^{1-1/p, p} \brk{\partial \manifold{M}, \manifold{N}}\) through averaging or convolution to take its value in the target manifold \(\mathcal{N}\) almost everywhere in \(\manifold{M}\).

When \(p \ge \dim \manifold{M}\), Sobolev mappings are continuous or almost -- when \(p = \dim \manifold{M}\) they still have \emph{vanishing mean oscillation} \citelist{\cite{Brezis_Nirenberg_1995}\cite{Brezis_Nirenberg_1996}\cite{Abbondandolo_1996}} -- so that the classical extension stays near the target in a neighourhood of the boundary and the question of extension of Sobolev mappings is reduced to the extension of continuous mappings \cite{Bethuel_Demengel_1995}.

If \(p < \dim \manifold{M}\), the extension of traces encounters various obstructions:
\begin{enumerate}[label=\((\mathrm{Ob}_{\arabic*})\)]
 \item 
 \label{it_hiethie2ohb6ieNgai3dai1u}
 if \(2 \le p < \dim \manifold{M}\) and \(\pi_{\floor{p - 1}} (\mathcal{N}) \not \simeq \set{0}\), then \citelist
{\cite{Hardt_Lin_1987}\cite{Bethuel_Demengel_1995}}
 \[   
 \sobolev^{1 - 1/p, p}\brk{\partial \manifold{M}, \manifold{N}}
   \not \subseteq \overline{\vphantom{\dot{W}}\tr_{\partial \manifold{M}} \brk{\sobolev^{1, p} \brk{\manifold{M}, \manifold{N}}}}.
 \]
 \item 
 \label{it_LoNieshosixiazohxohgieg0}
 if \(2 \le p <\dim \manifold{M}\) and if \(\pi_{\ell} \brk{\mathcal{N}}\) is infinite for some \(\ell \in \set{1, \dotsc, \floor{p - 1}}\), then \cite{Bethuel_2014} (see also \cite{Bethuel_Demengel_1995})
 \[
   \overline{\vphantom{\dot{W}}\tr_{\partial \manifold{M}} \brk{\sobolev^{1, p} \brk{\manifold{M}, \manifold{N}}}}
   \not \subseteq \tr_{\partial \manifold{M}} \brk{\sobolev^{1, p} \brk{\manifold{M}, \manifold{N}}},
 \]
 \item 
 \label{it_coReizacoHoofae7euz5vahk}
 if \(p \in \set{2, \dotsc, \dim \manifold{M}}\) and if  \(\pi_{\ell} \brk{\mathcal{N}} \not \simeq \set{0}\), then \cite{Mironescu_VanSchaftingen_2021_AFST}
 \[
   \overline{\vphantom{\dot{W}}\tr_{\partial \manifold{M}} \brk{\sobolev^{1, p} \brk{\manifold{M}, \manifold{N}}}}
   \not \subseteq \tr_{\partial \manifold{M}} \brk{\sobolev^{1, p} \brk{\manifold{M}, \manifold{N}}}.
 \]
\end{enumerate}

\subsection{Extensions to collar neighourhoods}

When the topology of the domain manifold \(\manifold{M}\) is sufficiently simple and if none of the obstructions \ref{it_hiethie2ohb6ieNgai3dai1u}, \ref{it_LoNieshosixiazohxohgieg0} or \ref{it_coReizacoHoofae7euz5vahk} hold then the trace is surjective:
After several results covering large classes of manifolds \citelist{\cite{Hardt_Lin_1987}\cite{Mironescu_VanSchaftingen_2021_AFST}}, 
it has been proved \cite{VanSchaftingen_ExtensionTraces} that if \(\smash{\manifold{M}'}\) is a compact manifold and \(1 < p < \dim \brk{\smash{\manifold{M}'} \times \intvr{0}{1}}\), then
\begin{equation}
\label{eq_AhH4EiNohbaesh8oocei2wua}
  \tr_{\smash{\manifold{M}'}} \brk{
  \sobolev^{1, p} \brk{\smash{\manifold{M}'}\times \intvr{0}{1}, \manifold{N}}}
  = \sobolev^{1-1/p, p} \brk{\smash{\manifold{M}'}, \manifold{N}}.
\end{equation}
if and only if the first homotopy groups \(\pi_{1} \brk{\manifold{N}}, \dotsc, \smash{\pi_{\floor{p - 2}}} \brk{\manifold{N}}\) are all finite and  the homotopy group \(\smash{\pi_{\floor{p - 1}}} \brk{\manifold{N}}\) is trivial;
moreover, one has then 
\begin{equation}
\label{eq_ohd0eequ9ahleuhouYi3beiG}
\energy^{1, p}_{\mathrm{ext}} \brk{u, \smash{\manifold{M}'}, \smash{\manifold{M}'} \times \intvr{0}{1}}
\le C \energy^{1 - 1/p, p} \brk{u, \smash{\manifold{M}'}}.
\end{equation}

A further question is to \emph{characterise} the range of the trace \(\tr_{\smash{\manifold{M}'}} \brk{
  \sobolev^{1, p} \brk{\smash{\manifold{M}'}\times \intvr{0}{1}, \manifold{N}}}\)
  when \eqref{eq_AhH4EiNohbaesh8oocei2wua} fails.
Isobe has first characterised the traces of Sobolev mappings by the \emph{asymptotics of a Ginzburg-Landau like relaxation} \cite{Isobe_2003}*{lem.\ 2.1 and proof of th.\ 1.2}.
In order to describe it, we  consider the penalised energy associated to  a given Borel-measurable function \(F \colon \Rset^\nu \to \intvc{0}{\infty}\)
and defined for each Borel-measurable function \(u \colon \partial \manifold{M} \to \Rset^\nu\) as
\begin{equation}
\label{eq_re0iP9aeShiejie5eey3ahxe}
  \energy^{1, p}_{F}
  \brk{u, \partial \manifold{M}, \manifold{M}}
\defeq 
  \inf \set[\Big]{\int_{\manifold{M}}
  \abs{\Deriv U}^p  + F \brk{u}
  \st U \in \sobolev^{1, p} \brk{\manifold{M}, \Rset^\nu} \text{ and }\tr_{\partial \manifold{M}} U= u}
\end{equation}
(Formally, \(\energy^{1, p}_{F}
  \brk{u, \partial \manifold{M}, \manifold{M}} = \energy^{1, p}_{\mathrm{ext}}
  \brk{u, \partial \manifold{M}, \manifold{M}}\) when \(F\) is taken to be \(0\) on \(\manifold{N}\) and \(\infty\) on \(\Rset^\nu \setminus \manifold{N}\).) 
Isobe has showed that
\[
\begin{split}
\tr_{\smash{\manifold{M}'}} \brk{
  &\sobolev^{1, p} \brk{\smash{\manifold{M}'}\times \intvr{0}{1}, \manifold{N}}}
  \\
  &\qquad = 
  \set{u \in \sobolev^{1 - 1/p, p} \brk{\smash{\manifold{M}'}, \manifold{N}}
  \st 
    \varliminf_{\varepsilon \to 0}  \energy^{1, p}_{\dist \brk{\cdot, \manifold{N}}^p/\varepsilon^p} \brk{u, \smash{\manifold{M}'}, \smash{\manifold{M}'} \times \intvr{0}{1}} < \infty}\\
&\qquad   = 
  \set{u \in \sobolev^{1 - 1/p, p} \brk{\smash{\manifold{M}'}, \manifold{N}}
  \st 
  \lim_{L \to 0}\;
  \varlimsup_{\varepsilon \to 0} \energy^{1, p}_{\dist \brk{\cdot, \manifold{N}}^p/\varepsilon^p} \brk{u, \smash{\manifold{M}'}, \smash{\manifold{M}'} \times \intvr{0}{L}} = 0}.
\end{split}
\]
Moreover, the quantities appearing in Isobe’s characterisations are comparable to the extension energy \(  \energy^{1, p}_{\mathrm{ext}} \brk{u, \smash{\manifold{M}'}, \smash{\manifold{M}'} \times \intvr{0}{1}}\).
This characterisation of traces has an extrinsic character, as \eqref{eq_re0iP9aeShiejie5eey3ahxe} defines a penalised energy for extensions that need not satisfy the constraint in \(\manifold{M}\).
  
Mazowiecka and the author \cite{Mazowiecka_VanSchaftingen_2023} have charecterised the traces with a \emph{quantitative topological screening}
in the spirit of Fuglede’s modulus \cite{Fuglede_1957} and of the topological screening for the approximation \cite{Bousquet_Ponce_VanSchaftingen}: for some \(\delta_0 \in \intvo{0}{\infty}\), one has \(u \in \tr_{\smash{\manifold{M}'}} \brk{\sobolev^{1, p} \brk{\manifold{M}' \times \intvr{0}{1}, \manifold{N}}}\) if and only if there exists a summable Borel-measurable function \(w \colon \smash{\manifold{M}'} \to [0,\infty]\)
such that if \(\Sigma\) is a finite homogeneous \(\floor{p}\)-dimensional simplicial complex, if \(\Sigma_0\) is a \(\floor{p - 1}\)-dimensional homogeneous subcomplex of \(\Sigma\) and if \(\sigma \colon  \Sigma \to \smash{\manifold{M}'}\) is a Lipschitz-continuous map satisfying
\begin{align}
\label{eq_ae5oosaejoo3aeB0ipe1OhZ4}
 \seminorm{\sigma}_{\mathrm{Lip}} \sup_{y \in \Sigma} \inf_{z \in \Sigma_0} d^{\Sigma} \brk{y, z} & \le \delta_0, &
 & \text{ and }&
 \int_{\Sigma_0} w \compose \sigma &< \infty,
\end{align}
then there exists a mapping \(V \in \sobolev^{1, p} (\Sigma, \manifold{N})\) such that \(\tr_{\Sigma_0} V   = u \compose \sigma \restr{\Sigma_0}\) and
\begin{equation*}
 \int_{\Sigma} \abs{\Deriv V}^p 
 \leq
  \gamma^\lambda_{\Sigma_0, \Sigma}
   \
   \seminorm{\sigma}_{\mathrm{Lip}}^{p - 1} \int_{\Sigma_0} w \compose \sigma,
\end{equation*}
where \(\lambda > 1\) is fixed,
\begin{equation*}
 \gamma^{\lambda}_{\Sigma_0, \Sigma}
 \defeq \sup_{\substack{z \in \Sigma_0 \\ \delta > 0}} \frac{\abs{B^\Sigma_{\lambda \delta} \brk{z}} }{\delta \abs{\Sigma_0 \cap B^\Sigma_{\delta} \brk{z}}}
\end{equation*} 
and \(B^{\Sigma}_r \brk{x}\) are balls in \(\Sigma\) with respect to the distance \(d^\Sigma\) induced by quotienting the canonical distance on the regular simplex.
Moreover, \(\energy^{1, p}_{\mathrm{ext}} \brk{u, \smash{\manifold{M}'}, \smash{\manifold{M}'} \times \intvr{0}{1}}\) and \(\int_{\smash{\manifold{M}'}} w\) can be taken to be comparable.
This second characterisation is more intrinsic, but requires to consider the whole range of Lipschitz-continuous mappings on simplicial complexes, which is conceptually much more intricate that the  Gagliardo semi-norm defining \(\sobolev^{s, p} \brk{\smash{\manifold{M}'}, \manifold{N}}\) in \eqref{eq_ooJubaebeeh4xae8eoPh0tei} that only considers pairs of points.

The problem of extension of traces is also related to an \emph{approximation problem}:
one has 
\[
\tr_{\smash{\manifold{M}'}}
\brk{ \smash{\smash{\sobolev}^{1, p} \brk{\smash{\manifold{M}'} \times \intvr{0}{1}, \manifold{N}}}}
\subseteq
  \smash{\overline{R^1_{m - \floor{p} - 2} \brk{\smash{\smash{\manifold{M}'}}, \manifold{N}}}}^{\smash{\smash{\sobolev}^{1 - 1/p, p} \brk{\smash{\manifold{M}'}, \manifold{N}}}}
\]
with equality if and only if the homotopy groups \(\pi_1 \brk{\manifold{N}}, \dotsc, \pi_{\floor{p - 1}} \brk{\manifold{N}}\) are finite and, when \(p \in \Nset\), the group
\(\pi_{p - 1}\brk{\manifold{N}}\) is trivial \cite{VanSchaftingen_ExtensionTraces},
where for  \(\ell \in \Zset\) such that \(\ell < m - 1\), \(R^1_\ell \brk{\smash{\manifold{M}'}, \manifold{N}}\) is defined as the set of maps \(u \colon \smash{\manifold{M}'} \to \manifold{N}\) for such that there is  some set \(\Sigma \subseteq \smash{\manifold{M}'}\) which is a finite union of compact subsets of \(\ell\)-dimensional embedded smooth submanifolds for which
\begin{align*}
u &\in C^1 \brk{\smash{\manifold{M}'} \setminus \Sigma, \manifold{N}}&
&\text{and }&
&\sup \, \set[\big]{\dist \brk{x, \Sigma} \abs{\Deriv u \brk{x}} \st x \in \smash{\manifold{M}'}\setminus \Sigma} < \infty.
\end{align*}

In the special case where the domain \(\manifold{M}'\) happens to be simply connected, the theory of liftings of Sobolev mappings over the
universal covering \(\pi \colon \lifting{\manifold{N}} \to \manifold{N}\) can be applied to show that one has \(u \in \tr_{\manifold{M}'} \brk{\sobolev^{1, p} \brk{\manifold{M}' \times \intvr{0}{1}, \manifold{N}}}\) if and only if \(u = \pi \compose \lifting{u}\) for some \(\lifting{u} \in \tr_{\manifold{M}'} \brk{\sobolev^{1, p} \brk{\manifold{M}' \times \intvr{0}{1}, \lifting{\manifold{N}}}}\)
\citelist{\cite{Mironescu_VanSchaftingen_2021_APDE}*{Th.\thinspace{}3}\cite{Bethuel_2014}*{Th.\ 1.5}}; 
when \(2 \le sp < \dim \manifold{M}'\), every \(u \in \sobolev^{s, p} \brk{\manifold{M}', \manifold{N}}\) can be written as \(\pi \compose \lifting{u}\) for some \(\lifting{u} \in \sobolev^{s, p} \brk{\manifold{M}', \lifting{\manifold{N}}}\) if and only if the fundamental group \(\pi_1 \brk{\manifold{N}}\) is finite \citelist{\cite{Bethuel_Chiron_2007}\cite{Mironescu_VanSchaftingen_2021_APDE}\cite{Bourgain_Brezis_Mironescu_2001}\cite{Bethuel_Demengel_1995}\cite{Isobe_2003}*{Th.\ 1.6 \& 1.7}}.
When \(\manifold{N} = \Sset^1\), whose universal covering is \(\pi \colon \Rset \to \Sset^1\), Sobolev liftings allows to reduce the extension of traces to the classical linear problem \citelist{\cite{Brezis_Mironescu_2021}*{Th. 11.2}\cite{Bourgain_Brezis_Mironescu_2004}\cite{Brezis_Mironescu_2001}\cite{Bethuel_Demengel_1995}}.

\medbreak

The main goal of the present work is to establish the local character of the extension of traces on a collar neighourhood.

\begin{theorem}
\label{theorem_local_collar}
Let \(\smash{\manifold{M}'}\) and \(\manifold{N}\) be compact Riemannian manifolds,
let \(\brk{G_i}_{i \in I}\) be a finite covering of \(\smash{\manifold{M}'}\) by open sets and let
\(u \colon \smash{\manifold{M}'} \to \manifold{N}\) be Borel-measurable.
One has 
\[
  u \in \tr_{\smash{\manifold{M}'}}\brk{
 \sobolev^{1, p} \brk{\smash{\manifold{M}'}\times \intvr{0}{1}, \manifold{N}}}
\]
if and only if for every \(i \in I\),
\[u \restr{G_i} \in \tr_{G_i}
 \brk{\sobolev^{1, p} \brk{G_i\times \intvr{0}{1}, \manifold{N}}}.
\]
Moreover, one has
\[
 \energy^{1, p}_{\mathrm{ext}} \brk{u, \smash{\manifold{M}'}, \smash{\manifold{M}'} \times \intvr{0}{1}}
 \le C \sum_{i \in I}  \energy^{1, p}_{\mathrm{ext}} \brk{u \restr{G_i},  G_i, G_i \times \intvr{0}{1}},
\]
where the constant \(C\) only depends on \(\brk{G_i}_{i \in I}\) and \(p\).
\end{theorem}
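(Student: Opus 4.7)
The ``only if'' direction is immediate: restricting any global extension \(U \in \sobolev^{1, p}\brk{\manifold{M}' \times \intvr{0}{1}, \manifold{N}}\) of \(u\) to \(G_i \times \intvr{0}{1}\) yields a local extension of \(u \restr{G_i}\). For the converse and the energy estimate, the plan is to glue the local extensions \(U_i\) not \emph{spatially} --- a partition-of-unity combination would leave the nonlinear target \(\manifold{N}\) --- but in the \emph{time} direction, by stacking reflected copies of each \(U_i\) that all begin and end at the common trace \(u(x)\).

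Fix a Lipschitz partition of unity \((\chi_i)_{i \in I}\) subordinate to \((G_i)_{i \in I}\) with \(\sum_i \chi_i \equiv 1\), and an enumeration \(i_1, \dots, i_n\) of \(I\). For each \(x \in \manifold{M}'\), split \(\intvc{0}{1}\) into consecutive ``bumps'' \(I_k(x) \defeq \intvc{a_{k-1}(x)}{a_k(x)}\) of width \(\chi_{i_k}(x)\), with \(a_k(x) \defeq \sum_{j \le k} \chi_{i_j}(x)\). For \(t \in I_k(x)\) write \(s(x, t) \defeq \min \set{t - a_{k-1}(x), \, a_k(x) - t} \in \intvc{0}{\chi_{i_k}(x)/2}\) and set \(U(x, t) \defeq U_{i_k}\brk{x, s(x, t)}\). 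Whenever \(\chi_{i_k}(x) > 0\) one has \(x \in G_{i_k}\), so \(U_{i_k}(x, \cdot)\) is defined on the required range; whenever \(\chi_{i_k}(x) = 0\) the bump degenerates to the single point \(t = a_{k-1}(x) = a_k(x)\) and \(U\) is assigned the value \(u(x)\) carried by the adjacent bumps.

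Across each Lipschitz graph \(\set{t = a_k(x)}\) both neighbouring pieces take the value \(u(x)\), and across the mid-bump graph \(\set{t = \tfrac{1}{2}(a_{k-1}(x) + a_k(x))}\) the two halves agree on \(U_{i_k}(x, \chi_{i_k}(x)/2)\); hence \(U\) is continuous and piecewise Sobolev, and \(\tr_{\manifold{M}'} U = u\) because the first non-degenerate bump starts with \(U_{i_k}(x, 0) = u(x)\). On each piece the chain rule yields
\[
 \abs{\Deriv U}^p \le C_p \brk[\big]{\abs{\Deriv U_{i_k}(x, s)}^p + L^p \abs{\partial_s U_{i_k}(x, s)}^p},
\]
with \(L\) controlling the Lipschitz constants of the \(\chi_{i_j}\); integrating in \((x, t)\) and changing variables \(s = t - a_{k-1}(x)\) (or \(s = a_k(x) - t\)), which has unit Jacobian in \(t\), produces
\[
 \int_{\manifold{M}' \times \intvr{0}{1}} \abs{\Deriv U}^p
 \le C \sum_{i \in I} \int_{G_i \times \intvr{0}{1}} \abs{\Deriv U_i}^p,
\]
with \(C\) depending only on \((G_i)_{i \in I}\) and \(p\). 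Taking the infimum over the choices of the \(U_i\) gives the claimed estimate.

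The substance of the construction is the \emph{reflection} inside each bump, which forces both endpoints of \(I_k(x)\) to land on the common trace \(u(x)\) and therefore removes any need to interpolate different \(U_i\) in the nonlinear target. The only mildly delicate check is the behaviour near \(\set{\chi_{i_k} = 0}\), where a bump collapses: since the \(t\)-range \(I_k(x)\) shrinks to a point and \(U_{i_k}\) is only used on the vanishingly thin slab \(s \in \intvc{0}{\chi_{i_k}(x)/2}\), the Lipschitz graphs \(\set{t = a_k(x)}\) delimiting the bumps remain globally Lipschitz, and the piecewise-Sobolev gluing goes through with no blow-up.
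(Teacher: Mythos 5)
Your construction is genuinely different from the paper's. The paper proceeds by induction over the charts of the covering: at each step it folds only \emph{two} maps (the previously built partial extension and the next local extension) across a fixed interface, using the corner-folding of \cref{proposition_folding_local} together with the cone construction of \cref{lemma_cone} to produce a non-degenerate transition region, so that only the standard two-domain gluing lemma across a single Lipschitz interface is ever invoked. You instead stack, over every point \(x\), reflected copies of all the local extensions in the normal direction, with widths \(\chi_{i_k}(x)\) given by a Lipschitz partition of unity; the reflection makes every interface carry the common trace \(u(x)\), which is the same underlying idea (fold so that the nonlinear target never has to be interpolated), packaged without induction and without the cone lemma. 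The energy bookkeeping you give (chain rule with the Lipschitz constants of the \(\chi_i\), unit Jacobian in \(t\)) is correct and yields the constant depending only on \((G_i)_{i\in I}\) and \(p\).

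The gap is in the step you dismiss as ``mildly delicate''. First, the assertion that \(U\) is \emph{continuous} is false and beside the point: the pieces only match in the sense of traces. More substantially, where some \(\chi_{i_k}\) vanish the graphs \(\{t=a_{k-1}(x)\}\) and \(\{t=a_k(x)\}\) coincide (possibly with \(\{t=0\}\)), the regions \(\{a_{k-1}(x)<t<a_k(x)\}\) are pinched open sets that are not Lipschitz domains, and several regions can meet along a single graph; the off-the-shelf gluing lemma (two Sobolev functions across one fixed Lipschitz interface, as in the reference the paper uses) does not directly apply, so ``the piecewise-Sobolev gluing goes through'' needs an actual argument -- e.g.\ testing the weak derivative against compactly supported fields and cancelling the boundary terms pairwise using that the trace from directly below and directly above every graph point is \(u(x)\), or an a.e.-line (ACL-type) verification. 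Likewise, the identification \(\tr_{\manifold{M}'\times\{0\}}U=u\) is asserted via ``the first non-degenerate bump'', but the set on which a given bump is the first non-degenerate one is not open, so locality of the trace on open sets does not settle it; one should instead use that for a.e.\ \(x\) the trace is the limit of \(U(x,t)\) as \(t\to 0\) along the vertical line, which near \(t=0\) equals \(U_{i_k}(x,t)\) for the relevant \(k\). These points are fixable, and with them your argument gives a valid alternative proof; note that avoiding exactly this degenerate-interface bookkeeping is what the paper's induction via \cref{lemma_cone} and \cref{proposition_folding_global} buys.
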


In particular, \cref{theorem_local_collar} indicates that further characterisations of the traces of Sobolev maps can focus without loss of generality on balls.

The extension is known to satisfy a local linear boundedness principle:
when \(\manifold{M}' = \Bset^{m - 1}\) and \(p < m\), then the equality
\eqref{eq_AhH4EiNohbaesh8oocei2wua} implies the estimate \eqref{eq_ohd0eequ9ahleuhouYi3beiG} \cite{Monteil_VanSchaftingen_2019}.
As a consequence of \cref{theorem_local_collar}, this principle holds for a general extension to a collar neighbourhood:
\begin{theorem}
\label{theorem_UBP}
Let \(\smash{\manifold{M}'}\) and \(\manifold{N}\) be compact Riemannian manifolds and let \(1 < p < \dim \brk{\manifold{M}' \times \intvr{0}{1}}\).
If 
\begin{equation*}
  \tr_{\smash{\manifold{M}'}} \brk{
  \sobolev^{1, p} \brk{\smash{\manifold{M}'}\times \intvr{0}{1}, \manifold{N}}}
  = \sobolev^{1-1/p, p} \brk{\smash{\manifold{M}'}, \manifold{N}},
\end{equation*}
then there is a constant \(C \in \intvo{0}{\infty}\) such that for every \(u \in \sobolev^{1-1/p, p} \brk{\smash{\manifold{M}'}, \manifold{N}}\),
\begin{equation*}
\energy^{1, p}_{\mathrm{ext}} \brk{u, \smash{\manifold{M}'}, \smash{\manifold{M}'} \times \intvr{0}{1}}
\le C \energy^{1 - 1/p, p} \brk{u, \smash{\manifold{M}'}}.
\end{equation*}
\end{theorem}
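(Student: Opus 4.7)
The strategy is to combine the local character result \cref{theorem_local_collar} with the known linear boundedness estimate on balls due to Monteil and the author \cite{Monteil_VanSchaftingen_2019}.

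The equality hypothesis, read through \eqref{eq_AhH4EiNohbaesh8oocei2wua}, forces the homotopy groups $\pi_{1}\brk{\manifold{N}}, \dotsc, \pi_{\floor{p - 2}}\brk{\manifold{N}}$ to be finite and $\pi_{\floor{p - 1}}\brk{\manifold{N}}$ to be trivial. These conditions depend only on $\manifold{N}$ and $p$, so the ``if'' direction of \eqref{eq_AhH4EiNohbaesh8oocei2wua} applied with $\Bset^{m - 1}$ in place of $\smash{\manifold{M}'}$ (writing $m \defeq \dim\brk{\smash{\manifold{M}'} \times \intvr{0}{1}}$) yields the corresponding equality on the ball. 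The local linear boundedness principle of \cite{Monteil_VanSchaftingen_2019} then provides a constant $C_0$ such that for every $v \in \sobolev^{1 - 1/p, p}\brk{\Bset^{m - 1}, \manifold{N}}$,
\begin{equation*}
\energy^{1, p}_{\mathrm{ext}}\brk{v, \Bset^{m - 1}, \Bset^{m - 1} \times \intvr{0}{1}} \le C_0 \, \energy^{1 - 1/p, p}\brk{v, \Bset^{m - 1}}.
\end{equation*}

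Next I would cover $\smash{\manifold{M}'}$ by finitely many open sets $\brk{G_i}_{i \in I}$, each bi-Lipschitz equivalent (via a chart) to $\Bset^{m - 1}$. Transferring the estimate above to each $G_i$ through the chart and using that the restriction of a Gagliardo energy satisfies $\energy^{1 - 1/p, p}\brk{u \restr{G_i}, G_i} \le C\, \energy^{1 - 1/p, p}\brk{u, \smash{\manifold{M}'}}$ (which is immediate when $G_i$ is a geodesic ball, since its intrinsic distance dominates the ambient distance), I obtain for every $u \in \sobolev^{1 - 1/p, p}\brk{\smash{\manifold{M}'}, \manifold{N}}$ and every $i \in I$,
\begin{equation*}
\energy^{1, p}_{\mathrm{ext}}\brk{u \restr{G_i}, G_i, G_i \times \intvr{0}{1}} \le C_1 \, \energy^{1 - 1/p, p}\brk{u, \smash{\manifold{M}'}}.
\end{equation*}
Finally, \cref{theorem_local_collar} assembles these local extensions into a global one:
\begin{equation*}
\energy^{1, p}_{\mathrm{ext}}\brk{u, \smash{\manifold{M}'}, \smash{\manifold{M}'} \times \intvr{0}{1}} \le C_2 \sum_{i \in I} \energy^{1, p}_{\mathrm{ext}}\brk{u \restr{G_i}, G_i, G_i \times \intvr{0}{1}} \le C_3 \, \energy^{1 - 1/p, p}\brk{u, \smash{\manifold{M}'}},
\end{equation*}
which is the desired estimate.

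The substantive ingredient here is \cref{theorem_local_collar}; the remaining work is essentially bookkeeping. The only mild technicality is the choice of the covering: the $G_i$ must be chosen both so that each is bi-Lipschitz equivalent to $\Bset^{m - 1}$ with uniform constants (so that the Monteil--VanSchaftingen estimate transfers uniformly) and so that the Gagliardo restriction inequality holds uniformly. Both requirements are met by taking the $G_i$ to be normal coordinate balls of sufficiently small radius.
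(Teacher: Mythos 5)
Your proposal is correct and is essentially the paper's intended proof: the paper gives no detailed argument for this theorem, saying only that it follows from \cref{theorem_local_collar} together with the linear boundedness principle on the ball from \cite{Monteil_VanSchaftingen_2019}, which is exactly the localisation-plus-gluing scheme you carry out. Your way of verifying the ball hypothesis — extracting the homotopy-group conditions from the hypothesis via the characterisation \eqref{eq_AhH4EiNohbaesh8oocei2wua} and reapplying its sufficiency on \(\Bset^{m-1}\) — is a legitimate filling-in of the detail the paper leaves implicit.
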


Even though the statement \cref{theorem_UBP} was already known as a byproduct characterisation of the cases where the trace is surjective, its new proof through \cref{theorem_local_collar} and the nonlinear uniform boundedness for the extension \cite{Monteil_VanSchaftingen_2019} gives some general reason for these linear estimates to go along with the surjectivity of the trace.

Whereas is the linear case \(\manifold{N} = \Rset^\nu\), \cref{theorem_local_collar} has a straightforward proof through a partition of the unity, such a linear construction is not available when \(\manifold{N}\) is a manifold.
Our construction is based instead on a folding procedure, whose essential construction is to note that given two mappings \(U_0, U_1
\in \sobolev^{1, p} \brk{\intvo{0}{1} \times \intvo{0}{1}, \manifold{N}}\)
satisfying the condition
\(
  \tr_{\intvo{0}{1} \times \set{0}} U_0
  = \tr_{\intvo{0}{1} \times \set{0}} U_1
\),
the mapping
\(U_* \colon \intvo{0}{1} \times \intvo{0}{1} \to  \manifold{N}\) defined for every \(\brk{x_1, x_2} \in \intvo{0}{1} \times \intvo{0}{1}\) by
\[
 U_* \brk{x_{1}, x_{2}}
 \defeq
 \begin{cases}
  U_0 \brk{2x_{1}, x_2 - 2 x_{1}} & \text{if \(x_1 \le x_2/2\)},\\
  U_1 \brk{x_2, 2x_{1} - x_2} & \text{if \(x_2/2 < x_{1} \le x_2\)},\\
  U_1 \brk{x_1, x_2} & \text{if \(x_2 < x_{1}\)},
 \end{cases}
\]
also
belongs to  
\(\sobolev^{1, p} \brk{\intvo{0}{1} \times \intvo{0}{1}, \manifold{N}}\) and has its Sobolev energy controlled by those of \(U_0\) and \(U_1\).

As a consequence of the proof, we also get a result on penalised extension energies:

\begin{theorem}
\label{theorem_local_penalised}
Let \(\smash{\manifold{M}'}\) be a compact Riemannian manifol,d let \(F \colon \Rset^\nu \to \intvc{0}{\infty}\) be Borel-measurable,
let \(\brk{G_i}_{i \in I}\) be a finite covering of \(\smash{\manifold{M}'}\) by open sets and let
\(u \colon \smash{\manifold{M}'} \to \manifold{N}\) be Borel-measurable.
One has 
\[
 \energy^{1, p}_{F} \brk{u, \smash{\manifold{M}'}, \smash{\manifold{M}'} \times \intvr{0}{1}}
 \le C \sum_{i \in I}  \energy^{1, p}_{F} \brk{u \restr{G_i},  G_i, G_i \times \intvr{0}{1}},
\]
where the constant \(C\) only depends on \(\brk{G_i}_{i \in I}\) and \(p\).
\end{theorem}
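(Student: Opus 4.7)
The plan is to extract the desired inequality directly from the construction used to prove \cref{theorem_local_collar}, observing that the folding procedure underlying that construction controls integrals of the form $\int F \compose U$ in exactly the same way as it controls the Dirichlet integral $\int \abs{\Deriv U}^p$.

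A straightforward induction on $\abs{I}$ reduces the statement to the case $\abs{I} = 2$, say $I = \set{0, 1}$. For this base case, and given $\varepsilon > 0$, one fixes $U_i \in \sobolev^{1, p}\brk{G_i \times \intvr{0}{1}, \Rset^\nu}$ with $\tr_{G_i} U_i = u \restr{G_i}$ and total penalised energy within $\varepsilon$ of $\energy^{1, p}_F\brk{u \restr{G_i}, G_i, G_i \times \intvr{0}{1}}$. I would then build a global extension $U$ with $\tr_{\smash{\manifold{M}'}} U = u$ by choosing a smooth cutoff $\chi$ on $\smash{\manifold{M}'}$ with $\supp \chi \Subset G_0$ and $\supp (1 - \chi) \Subset G_1$, and applying the two-dimensional folding from the introduction fiberwise along a tubular neighbourhood of the transition set $\set{\chi = 1/2} \subset G_0 \cap G_1$, using the common boundary trace $u$ of $U_0$ and $U_1$ to glue them; outside this region, $U$ is taken to coincide with whichever $U_i$ is defined there.

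The crucial observation is that on each sector appearing in the folding formula, $U$ is obtained from one of the $U_i$ via a piecewise affine, hence bi-Lipschitz, change of variables whose Jacobian is bounded above and below by positive constants depending only on $\chi$ and on the covering. The ordinary change of variables formula therefore yields, in a single stroke,
\[
 \int_{\smash{\manifold{M}'} \times \intvr{0}{1}} \abs{\Deriv U}^p + F \compose U
 \le C \sum_{i \in \set{0, 1}} \int_{G_i \times \intvr{0}{1}} \abs{\Deriv U_i}^p + F \compose U_i,
\]
which gives the announced inequality after letting $\varepsilon \to 0$ and unwinding the induction on $\abs{I}$.

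The main technical point is the compatibility of the folded pieces at the interfaces between sectors, so that the assembled map $U$ genuinely lies in $\sobolev^{1, p}$; in the two-dimensional model from the introduction, these matchings are guaranteed by the equality of the boundary traces of $U_0$ and $U_1$. Because this verification is a pointwise check and is oblivious to whether one integrates $\abs{\Deriv U}^p$ or $F \compose U$, no additional argument is required beyond the one already present in the proof of \cref{theorem_local_collar}, which is precisely why \cref{theorem_local_penalised} can be advertised as following from the same proof.
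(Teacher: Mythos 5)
Your central observation is exactly the one the paper relies on: in every sector of the folding formula the new map is one of \(U_0, U_1\) precomposed with a fixed piecewise affine, hence bi-Lipschitz, change of variables with Jacobian bounds depending only on \(p\) and the geometry of the gluing region, so any integrand of the form \(F \compose U\) is controlled in the same stroke as \(\abs{\Deriv U}^p\); this is precisely the content of \cref{proposition_folding_penal} and \cref{proposition_folding_penal_global}. Your two-set gluing on the compact manifold \(\smash{\manifold{M}'}\) is also essentially sound (cut along a \emph{regular} level set of \(\chi\), via Sard, rather than literally \(\set{\chi = 1/2}\), and fold fiberwise in a tubular neighbourhood of it inside \(G_0 \cap G_1\)).

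The genuine gap is the claim that ``a straightforward induction on \(\abs{I}\) reduces the statement to the case \(\abs{I} = 2\).'' To unwind that induction you must apply the inductive hypothesis to \(u \restr{H}\) with \(H = G_1 \cup \dotsb \cup G_{K-1}\), but \(H\) is an open, non-compact subset of \(\smash{\manifold{M}'}\), so neither the theorem being proved nor your two-set construction applies to it as stated: on such an ambient set the two relative complements \(H \setminus G_i\) may both accumulate at \(\partial H\), the transition set is then non-compact, the overlap in which the folding must take place can pinch, and the fiberwise folding estimate degenerates (the change of variables becomes anisotropic with a factor comparable to the inverse thickness of the overlap, so the constant is not uniform). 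This is exactly the difficulty that the paper's proof of \cref{proposition_folding_global} circumvents: it glues one chart at a time, and \cref{lemma_cone} produces a transition region --- a cone intersected with an annulus, fixed by the covering --- of uniformly controlled geometry contained in the already-constructed set. The repair is easy: either reuse the construction of \cref{proposition_folding_global} verbatim, noting as you do that it only composes the \(U_i\) with bi-Lipschitz changes of variables determined by the covering so that the \(F\)-term rides along (this is how the paper passes to \cref{proposition_folding_penal_global}), or reorganise your induction so that at each step you only need to cover a compact set and may shrink, keeping every interface a compact hypersurface with a tubular neighbourhood of uniform thickness inside the relevant overlap.
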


\subsection{Global extension of manifolds}
As observed by Isobe \cite{Isobe_2003}, the problem of global extension of Sobolev mappings from the boundary of a manifold faces an additional \emph{global obstruction}:
if 
\[
 C 
  \brk{\partial \manifold{M}^{\floor{p}}, \manifold{N}}
  \not \subseteq 
  \tr_{\partial \manifold{M}^{\floor{p}}} \brk{C
  \brk{\manifold{M}^{\floor{p}}, \manifold{N}}},
\]
then
 \[   
 \overline{\tr_{\partial \manifold{M}} \vphantom{\dot{W}}\brk{\sobolev^{1, p} \brk{\manifold{M}, \manifold{N}}}}^{\sobolev^{1 - 1/p, p}\brk{\partial \manifold{M}, \manifold{N}}}
 \not \subseteq 
\sobolev^{1 - 1/p, p}\brk{\partial \manifold{M}, \manifold{N}}.
 \]
Here, \(\manifold{M}^{\ell}\) denotes the \(\ell\)-dimensional component of a triangulation of \(\manifold{M}\).

Combined with the obstructions to the extension on a collar neighbourhood, this is the only obstruction to the extension of Sobolev mappings:
 if \(\manifold{M}\) is manifold with compact boundary \(\partial \manifold{M}\) and if \(1 < p < \dim \manifold{M}\), then \cite{VanSchaftingen_ExtensionTraces}
\begin{equation}
\label{eq_aivahbah7muiK9Hia3geeyoy}
  \tr_{\partial \manifold{M}} \brk{
  \sobolev^{1, p} \brk{\manifold{M}, \manifold{N}}}
  = \sobolev^{1-1/p, p} \brk{\partial \manifold{M}, \manifold{N}}.
\end{equation}
if and only if \begin{enumerate*}[label=(\roman*)]  \item  the homotopy groups \(\pi_{1} \brk{\manifold{N}}, \dotsc, \pi_{\floor{p - 2}} \brk{\manifold{N}}\) are all finite, \item the homotopy group \(\pi_{\floor{p - 1}} \brk{\manifold{N}}\) is trivial and \item
\(C 
  \brk{\partial \manifold{M}^{\floor{p}}, \manifold{N}}
  = 
  \tr_{\partial \manifold{M}^{\floor{p}}} \brk{C
  \brk{\manifold{M}^{\floor{p}}, \manifold{N}}}.
\)
  \end{enumerate*}

Again, it is natural to characterise the set \( \tr_{\partial \manifold{M}} \brk{
  \sobolev^{1, p} \brk{\manifold{M}, \manifold{N}}}\) when 
  \eqref{eq_aivahbah7muiK9Hia3geeyoy} fails.
The first contribution in this direction goes back to White 
\cite{White_1988} who has proved that 
\begin{multline*}
  \brk{\tr_{\partial \manifold{M}} \brk{
  \sobolev^{1, p} \brk{\manifold{M}, \manifold{N}}}}
  \cap C \brk{\partial \manifold{M}, \manifold{N}}\\
  = \sobolev^{1-1/p, p} \brk{\partial \manifold{M}, \manifold{N}}
  \cap \set{u \in C \brk{\partial \manifold{M}, \manifold{N}} \st 
  u\vert_{\partial \manifold{M}^{\floor{p}}} \in \tr_{\partial \manifold{M}^{\floor{p}}} C \brk{\manifold{M}^{\floor{p}}, \manifold{N}}}.
\end{multline*}

Isobe \cite{Isobe_2003} has proved that one has  \(u
\in \tr_{\partial \manifold{M}}  \brk{
  \sobolev^{1, p} \brk{\manifold{M}, \manifold{N}}}\) if and only if one has 
  \(u \in \tr_{\partial \manifold{M}}  \brk{
  \sobolev^{1, p} \brk{\partial \manifold{M} \times \intvr{0}{1}, \manifold{N}}}\)
  and \(u \restr{\partial \manifold{M}^{\floor{p - 1}}} \in \tr_{\partial \manifold{M}^{\floor{p - 1}}} C \brk{\manifold{M}^{\floor{p}}, \manifold{N}}\)
  for a generic skeleton (or homotopic in VMO to a map in \(\tr_{\partial \manifold{M}^{p - 1}} C \brk{\manifold{M}^{\floor{p}}, \manifold{N}}\) when \(p \in \Nset\)).
  
The quantitative topological screening characterisation of Mazowiecka and the author also has a global version: for some \(\delta_0 > 0\), \(u
\in \tr_{\partial \manifold{M}}  \brk{
  \sobolev^{1, p} \brk{\manifold{M}, \manifold{N}}}\) if and only if there exists a summable Borel-measurable function \(w \colon \partial \manifold{M} \to \intvc{0}{\infty}\)
such that if \(\Sigma\) is a finite homogeneous \(\floor{p}\)-dimensional simplicial complex, if \(\Sigma_0\subset \Sigma\) is a \(\floor{p - 1}\)-dimensional homogeneous simplicial subcomplex and if
 \(\sigma \colon  \Sigma \to \manifold{M}\) is a Lipschitz-continuous map satisfying
\begin{align}
\label{eq_Pheenahgiegu8Meiphiufaev}
\sigma \brk{\Sigma_0} & \subseteq \partial \manifold{M},&
 \seminorm{\sigma}_{\mathrm{Lip}} \sup_{y \in \Sigma} \inf_{z \in \Sigma_0} d^{\Sigma} \brk{y, z} & \le \delta_0, &
 & \text{ and }&
 \int_{\Sigma_0} w \compose \sigma &< \infty,
\end{align}
then there exists a mapping \(V \in \sobolev^{1, p} (\Sigma, \manifold{N})\) such that \(\tr_{\Sigma_0} V   = u \compose \sigma \restr{\Sigma_0}\) and
\begin{equation*}
 \int_{\Sigma} \abs{\Deriv V}^p 
 \leq
  \gamma^\lambda_{\Sigma_0, \Sigma}
   \
   \seminorm{\sigma}_{\mathrm{Lip}}^{p - 1} \int_{\Sigma_0} w \compose \sigma,
\end{equation*}
where \(\lambda > 1\) is fixed.

Again the global Sobolev extension of traces is related to an approximation problem.
Defining \(R^1_\ell \brk{\manifold{M}, \manifold{N}}\) as the set of mappings 
\(U \colon \manifold{M} \to \manifold{N}\) such that there exists some set \(\Sigma \subseteq \manifold{M}\) which is a finite union of compact subsets of embedded \(\ell\)-dimensional compact submanifolds transversal to \(\partial \manifold{M}\)
such that
\begin{align*}
U &\in C^1 \brk{\manifold{M} \setminus \Sigma, \manifold{N}}&
&\text{and}&
&\sup \set{\dist \brk{x, \Sigma} \abs{\Deriv U \brk{x}}\st x \in \manifold{M} \setminus \Sigma}  < \infty,
\end{align*}
then 
\begin{equation}
 \tr_{\partial \manifold{M}}
\brk{ \smash{\smash{\sobolev}^{1, p} \brk{\manifold{M}, \manifold{N}}}}
\subseteq
 \overline{
  \tr_{\partial \manifold{M}}
 \brk[\big]{
  R^1_{m - \floor{p} - 1} \brk{\smash{\manifold{M}}, \manifold{N}}}}^{\smash{\smash{\sobolev}^{1 - 1/p, p} \brk{\partial \manifold{M}, \manifold{N}}}},
\end{equation}
with equality if and only if the homotopy groups \(\pi_1 \brk{\manifold{N}}, \dotsc, \pi_{\floor{p - 1}} \brk{\manifold{N}}\) are finite and, when \(p \in \Nset\), the homotopy group
\(\pi_{p - 1}\brk{\manifold{N}}\) is trivial \cite{VanSchaftingen_ExtensionTraces}.

Although the global extension problem has an intrinsic topological character, the next result factors the global problem in a collar neighbourhood problem, that can be localised and a purely topological obstruction, as stated by Isobe \cite{Isobe_2003}.

\begin{theorem}
\label{theorem_global}
Let \(\manifold{M}\) be a Riemannian manifold with compact boundary and \(\manifold{N}\) be a compact Riemannian manifold. 
Then, the following are equivalent
\begin{enumerate}[label=(\roman*)]
 \item
 \label{it_phue3au9shah5mo2Eexuu1se}
\(
u \in \tr_{\partial \manifold{M}}\brk{
 \sobolev^{1, p} \brk{\manifold{M}, \manifold{N}}
}\),
\item
\label{it_etho8eatei8xoomea2ahS7vo}
\(
  u \in \tr_{\partial \manifold{M}}\brk{
 \sobolev^{1, p} \brk{\partial \manifold{M} \times \intvr{0}{1}, \manifold{N}}}
\) 
and the set
\begin{equation*}
\begin{split}
 \{\xi \in B_\delta \st u \compose \Pi_{\partial \manifold{M}}\compose \brk{\id_{\partial \manifold{M}^{\floor{p - 1}}} + \xi}
 &\text{ is homotopic in \(\VMO \brk{\partial \manifold{M}^{\floor{p - 1}}, \manifold{N}}\)}\\
 &\text{to some \(V \restr{\partial \manifold{M}^{\floor{p - 1}}}\) with \(V \in C \brk{\manifold{M}^{\floor{p}}, \manifold{N}}\)}\},
\end{split}
\end{equation*}
has positive measure, where \(\partial \manifold{M}\) is isometrically embedded into \(\Rset^\mu\) and \(\Pi_{\partial \manifold{M}} \colon \partial \manifold{M} + B_\delta \to \partial \manifold{M}\) is the corresponding nearest-point retraction,
\item
\label{it_aikai2Mepohn8sui5Kaigha7}
\(
  u \in \tr_{\partial \manifold{M}}
 \sobolev^{1, p} \brk{\partial \manifold{M} \times \intvr{0}{1}, \manifold{N}}
\)
and for every Borel-measurable function \(w \colon \partial \manifold{M} \to \intvc{0}{\infty}\) such that \(\int_{\partial \manifold{M}} w < \infty\), there is a triangulation of \(\manifold{M}\) such that \(u \restr{\partial \manifold{M}^{\floor{p - 1}}}\) is homotopic in \(\VMO \brk{\partial \manifold{M}^{\floor{p - 1}}, \manifold{N}}\) to \(V \restr{\partial \manifold{M}^{\floor{p - 1}}}\) for some \(V \in C \brk{\manifold{M}^{\floor{p}}, \manifold{N}}\).
\end{enumerate}
Moreover, one has then 
\begin{equation}
\energy^{1, p}_{\mathrm{ext}} \brk{u, \partial \manifold{M}, \manifold{M}}
\le \Theta \brk[\big]{\energy^{1, p}_{\mathrm{ext}} \brk{u, \partial \manifold{M}, \partial \manifold{M} \times \intvr{0}{1}}},
\end{equation}
for some convex function \(\Theta \colon \intvr{0}{\infty} \to \intvr{0}{\infty}\) satisfying \(\Theta \brk{0} = 0\) that does not depend on \(u\).
\end{theorem}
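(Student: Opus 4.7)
The plan is to prove the chain (i) $\Rightarrow$ (iii) $\Rightarrow$ (ii) $\Rightarrow$ (i), reading off the energy estimate from the constructive implication (iii) $\Rightarrow$ (i). The guiding principle is that the first clause of (ii) and (iii)---extendability to a collar---captures all local analytical obstructions, while the second clause isolates the remaining global topological obstruction identified by Isobe.

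For (i) $\Rightarrow$ (iii), I would fix an extension $U \in \sobolev^{1, p} \brk{\manifold{M}, \manifold{N}}$ with $\tr_{\partial \manifold{M}} U = u$ and an arbitrary summable weight $w \colon \partial \manifold{M} \to \intvc{0}{\infty}$. A Fubini argument averaging over translations of a triangulation yields a triangulation for which $U$ restricts to a Sobolev map on $\manifold{M}^{\floor{p}}$, $u$ restricts to a Sobolev map on $\partial \manifold{M}^{\floor{p - 1}}$, and the integral of $w$ on the boundary skeleton is finite. The Sobolev embedding on the $\floor{p}$-skeleton places $U \restr{\manifold{M}^{\floor{p}}}$ in $\VMO \brk{\manifold{M}^{\floor{p}}, \manifold{N}}$, so its homotopy class is well-defined and matches that of a continuous representative $V$ on $\manifold{M}^{\floor{p}}$ obtained by local smoothing and mollified nearest-point projection onto $\manifold{N}$; the $\VMO$-homotopy between $V \restr{\partial \manifold{M}^{\floor{p - 1}}}$ and $u \restr{\partial \manifold{M}^{\floor{p - 1}}}$ then comes from sliding both along the trace of the smoothed bulk map. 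The implication (iii) $\Rightarrow$ (ii) is obtained by parametrising the triangulations that appear in (iii) by small translations of a fixed reference skeleton; the Fubini step making $u$ well-defined on the translated skeleton identifies such translations with the parameters $\xi \in B_\delta$ in (ii), and the corresponding set has positive measure because (iii) supplies a valid triangulation for each sufficiently fine weight $w$.

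The main effort is (iii) $\Rightarrow$ (i). Three ingredients must be produced and then glued. First, Theorem \ref{theorem_local_collar} and Theorem \ref{theorem_local_penalised}, applied to the collar clause of (iii), supply a Sobolev extension $U_{\mathrm{collar}}$ of $u$ to a thin collar $\partial \manifold{M} \times \intvr{0}{\delta}$ controlled by the collar extension energy. Secondly, the topological hypothesis supplies a continuous map $V \in C \brk{\manifold{M}^{\floor{p}}, \manifold{N}}$ whose restriction to $\partial \manifold{M}^{\floor{p - 1}}$ is $\VMO$-homotopic to $u \restr{\partial \manifold{M}^{\floor{p - 1}}}$; by radial projection away from a dual $\brk{m - \floor{p} - 1}$-skeleton, $V$ extends to a mapping of class $R^1_{m - \floor{p} - 1} \brk{\manifold{M}, \manifold{N}} \subseteq \sobolev^{1, p} \brk{\manifold{M}, \manifold{N}}$. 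Thirdly, the $\VMO$-homotopy between the two boundary skeletons is realised as a Sobolev homotopy on a thin collar of the skeleton, which is a genuinely lower-dimensional and hence subcritical trace problem. These three pieces, defined on overlapping collars, are reconciled by the folding procedure from the introduction, which splices two Sobolev maps sharing a common trace without breaking the manifold constraint. The delicate point is coordinating the folding with the singular set of the $R^1$-extension so that no energy blows up there; this is arranged by performing the folding inside the regular locus of the $R^1$-extension.

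The energy estimate is then extracted from the construction. The collar contribution is linear in the collar extension energy by Theorem \ref{theorem_local_collar}. The $R^1$-extension contributes an energy determined by the homotopy class of $V$; at a given collar energy level only finitely many such classes can occur, and the corresponding energies form a bounded function of the collar energy, whose convex envelope yields the advertised $\Theta$ with $\Theta \brk{0} = 0$.
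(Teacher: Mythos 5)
Your plan does not close the cycle of implications: as written you prove (i) $\Rightarrow$ (iii), (iii) $\Rightarrow$ (ii) and (iii) $\Rightarrow$ (i), so condition (ii) is never used as a hypothesis and its equivalence with (i) and (iii) is not established. Moreover the step (iii) $\Rightarrow$ (ii) is itself problematic: (iii) only provides, for each summable weight $w$, \emph{some} triangulation adapted to $w$, with no control on its geometry, whereas (ii) demands that a \emph{positive-measure} set of translates $\xi$ of one fixed reference skeleton satisfy the homotopy condition. Passing from a single good but arbitrary triangulation to a positive measure of translated skeletons requires precisely the stability of the $\VMO$-homotopy class of $u$ under change of generic skeleton, i.e.\ the topological screening results of \cite{Bousquet_Ponce_VanSchaftingen}, which you never invoke. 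The paper runs the cycle the other way, (i) $\Rightarrow$ (ii) $\Rightarrow$ (iii) $\Rightarrow$ (i): there (ii) $\Rightarrow$ (iii) is the easy step (a Fubini argument intersects the positive-measure set of (ii) with the full-measure set of translates on which $\int_{\partial\manifold{M}^{\floor{p-1}}} w \compose \sigma_\xi < \infty$), and the homotopy-stability input is concentrated in (i) $\Rightarrow$ (ii) and (iii) $\Rightarrow$ (i) through \cite{Mazowiecka_VanSchaftingen_2023} and \cite{Bousquet_Ponce_VanSchaftingen}.

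The constructive implication also has a gap at its crucial point. \Cref{proposition_folding_local} splices two Sobolev maps that share the \emph{same} trace on the interface; in your construction, however, the collar extension of $u$ and the $R^1_{m-\floor{p}-1}$-extension of $V$ have different boundary values, which are only known to be $\VMO$-homotopic on the $\floor{p-1}$-dimensional boundary skeleton. To glue them you must produce a $\sobolev^{1,p}$ transition layer on all of $\partial\manifold{M}\times\intvc{0}{1}$, a domain of full dimension, and filling the top-dimensional cells of $\partial\manifold{M}$ from data prescribed on the skeleton is exactly where the analytical obstructions live; a ``Sobolev homotopy on a thin collar of the skeleton'' is lower-dimensional and does not address this. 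The paper avoids any such construction: it verifies the quantitative screening criterion of \cite{Mazowiecka_VanSchaftingen_2023}*{Th.\ 1.3} abstractly, using \cite{Bousquet_Ponce_VanSchaftingen} to replace $u \compose \sigma\restr{\Sigma_0}$ by $V \compose \xi\restr{\Sigma_0}$ for a simplicial $\xi$, and it obtains the estimate with $\Theta$ convex and $\Theta\brk{0}=0$ from the nonlinear estimate of \cite{VanSchaftingen_ExtensionTraces} applied to a good slice $u_1 = \tr_{\partial\manifold{M}\times\set{1}} U_0$ of the collar extension. Your counting argument for $\Theta$ (finitely many homotopy classes per energy level, then a convex envelope) is not substantiated; in particular $\Theta\brk{0}=0$ requires a small-energy statement, which finiteness of homotopy classes does not provide.
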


A key feature of the topological condition of \ref{it_etho8eatei8xoomea2ahS7vo} and \ref{it_aikai2Mepohn8sui5Kaigha7} is that they essentially prescribe a topological condition on the boundary datum \(u\) at large scales on \(\partial \manifold{M}\).

\section{Folding extensions}

\subsection{Folding mappings across a flat boundary}

Our main tool to prove \cref{theorem_local_collar} is the following proposition that will allow us to transition from one extension to another extension.
The trick to deal with the nonlinear constraint on the target is to perform a suitable \emph{folding} of the domain.

\begin{figure}
\includegraphics{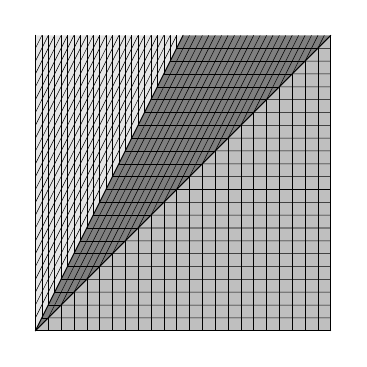}
\caption{The coordinates lines of the changes of variable used in the construction of \(U_*\) by folding in \cref{proposition_folding_local}.}
\end{figure}

\begin{proposition}
\label{proposition_folding_local}
Given manifolds \(\manifold{W}\) and \(\manifold{N}\), and \(U_0, U_1\colon
\manifold{W} \times \intvo{0}{1} \times \intvo{0}{1} \to  \manifold{N}\),
we define
\(U_* \colon \manifold{W} \times \intvo{0}{1} \times \intvo{0}{1} \to  \manifold{N}\) 
for each \(x = \brk{x', x_{m - 1}, x_{m}} \in \manifold{W} \times \intvo{0}{1} \times \intvo{0}{1}\)
by
\begin{equation}
\label{eq_Feeyeeyeepoiro8waiGai6ai}
 U_* \brk{x', x_{m - 1}, x_{m}}
 \defeq
 \begin{cases}
  U_0 \brk{x', 2x_{m - 1}, x_m - 2 x_{m - 1}} & \text{if \(x_{m - 1} \le x_m/2\)},\\
  U_1 \brk{x', x_m, 2x_{m - 1} - x_m} & \text{if \(x_m/2 < x_{m - 1} \le x_m\)},\\
  U_1 \brk{x', x_{m - 1}, x_{m}} & \text{if \(x_m < x_{m - 1}\)}.
 \end{cases}
\end{equation}
If \(p \in \intvr{1}{\infty}\), if \(U_0, U_1
\in \sobolev^{1, p} \brk{\manifold{W} \times \intvo{0}{1} \times \intvo{0}{1}, \manifold{N}}\)
and if
\begin{equation}
\label{eq_oox9jo6piekaiPhiex8iequo}
  \tr_{\manifold{W} \times \intvo{0}{1} \times \set{0}} U_0
  = \tr_{\manifold{W} \times \intvo{0}{1} \times \set{0}} U_1,
\end{equation}
then
\(U_* \in \sobolev^{1, p} \brk{\manifold{W} \times \intvo{0}{1} \times \intvo{0}{1}, \manifold{N}}\),
\begin{gather*}
   \tr_{\manifold{W} \times \intvo{0}{1} \times \set{0}} U_*
   = \tr_{\manifold{W} \times \intvo{0}{1} \times \set{0}} U_0
  = \tr_{\manifold{W} \times \intvo{0}{1} \times \set{0}} U_1,\\
  \tr_{\manifold{W} \times \set{0} \times \intvo{0}{1}} U_*
  = \tr_{\manifold{W} \times \set{0} \times \intvo{0}{1}} U_0\\
  \tr_{\manifold{W} \times \set{1} \times \intvo{0}{1}} U_*
  = \tr_{\manifold{W} \times \set{1} \times \intvo{0}{1}} U_1,
\end{gather*}
and
\begin{equation}
 \int_{\manifold{W} \times \intvo{0}{1} \times \intvo{0}{1}}
 \abs{\Deriv U_*}^p
 \le
 C
 \int_{\manifold{W} \times \intvo{0}{1} \times \intvo{0}{1}}
 \abs{\Deriv U_0}^p +
 \abs{\Deriv U_1}^p,
\end{equation}
where the constant \(C\) only depends on \(p\).
\end{proposition}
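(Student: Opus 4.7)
My plan is to split the domain $\manifold{W} \times \intvo{0}{1} \times \intvo{0}{1}$ into the three Lipschitz pieces corresponding to the three cases in \eqref{eq_Feeyeeyeepoiro8waiGai6ai},
\[
\begin{aligned}
\Omega_A &\defeq \manifold{W} \times \set[\big]{\brk{x_{m-1}, x_m} \in \intvo{0}{1}^2 \st x_{m-1} < x_m/2},\\
\Omega_B &\defeq \manifold{W} \times \set[\big]{\brk{x_{m-1}, x_m} \in \intvo{0}{1}^2 \st x_m/2 < x_{m-1} < x_m},\\
\Omega_C &\defeq \manifold{W} \times \set[\big]{\brk{x_{m-1}, x_m} \in \intvo{0}{1}^2 \st x_m < x_{m-1}},
\end{aligned}
\]
so that $U_* \restr{\Omega_A} = U_0 \compose \Phi_A$, $U_* \restr{\Omega_B} = U_1 \compose \Phi_B$, and $U_* \restr{\Omega_C} = U_1$, where $\Phi_A \brk{x', x_{m-1}, x_m} \defeq \brk{x', 2x_{m-1}, x_m - 2 x_{m-1}}$ and $\Phi_B \brk{x', x_{m-1}, x_m} \defeq \brk{x', x_m, 2 x_{m-1} - x_m}$ are affine bi-Lipschitz diffeomorphisms from $\Omega_A$ and $\Omega_B$ onto subsets of $\manifold{W} \times \intvo{0}{1}^2$, each with Jacobian of absolute value $2$ and Lipschitz constants bounded by a universal constant.

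The Sobolev chain rule under bi-Lipschitz composition together with the change of variables formula then yield that each of $U_* \restr{\Omega_A}$, $U_* \restr{\Omega_B}$, $U_* \restr{\Omega_C}$ belongs to $\sobolev^{1,p}$ of the respective subdomain, with
\[
\int_{\Omega_A} \abs{\Deriv U_*}^p \le C \int_{\Phi_A \brk{\Omega_A}} \abs{\Deriv U_0}^p, \qquad
\int_{\Omega_B} \abs{\Deriv U_*}^p \le C \int_{\Phi_B \brk{\Omega_B}} \abs{\Deriv U_1}^p,
\]
and $\int_{\Omega_C} \abs{\Deriv U_*}^p = \int_{\Omega_C} \abs{\Deriv U_1}^p$; since $\Phi_A \brk{\Omega_A}, \Phi_B \brk{\Omega_B} \subseteq \manifold{W} \times \intvo{0}{1}^2$, summing the three inequalities yields the desired energy estimate. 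The manifold constraint is automatically preserved on each piece because $\Phi_A$ and $\Phi_B$ map null sets to null sets and $U_0, U_1$ take values in $\manifold{N}$ almost everywhere.

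To upgrade this piecewise regularity to $U_* \in \sobolev^{1,p} \brk{\manifold{W} \times \intvo{0}{1}^2, \manifold{N}}$, I would invoke the classical gluing principle along Lipschitz interfaces: two Sobolev maps on adjacent Lipschitz subdomains whose traces on their common interface agree concatenate to a Sobolev map on the union. Across the interface $\overline{\Omega_B} \cap \overline{\Omega_C} = \set{x_{m-1} = x_m}$, both one-sided traces evaluate to $U_1 \brk{x', x_m, x_m}$, so the matching is automatic. Across the fold $\overline{\Omega_A} \cap \overline{\Omega_B} = \set{x_{m-1} = x_m/2}$, the $\Omega_A$-side trace coincides with the trace of $U_0$ on $\manifold{W} \times \intvo{0}{1} \times \set{0}$ read in the variables $\brk{x', x_m}$, while the $\Omega_B$-side trace coincides with the trace of $U_1$ on the same face in the same variables; these agree by the compatibility hypothesis \eqref{eq_oox9jo6piekaiPhiex8iequo}. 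The three boundary traces of $U_*$ on $\manifold{W} \times \set{0} \times \intvo{0}{1}$, $\manifold{W} \times \set{1} \times \intvo{0}{1}$, and $\manifold{W} \times \intvo{0}{1} \times \set{0}$ then follow by noting which region abuts each face (respectively $\Omega_A$, $\Omega_C$, and $\Omega_C$) and applying the corresponding change of variables.

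The main technical obstacle I foresee is the careful identification of the one-sided traces of the pulled-back maps $U_0 \compose \Phi_A$ and $U_1 \compose \Phi_B$ on the interior fold $\set{x_{m-1} = x_m/2}$ with the external traces of $U_0$ and $U_1$ on the face $\manifold{W} \times \intvo{0}{1} \times \set{0}$; once this invariance of trace under bi-Lipschitz pullback is granted, \eqref{eq_oox9jo6piekaiPhiex8iequo} makes the interfacial traces coincide and the standard gluing lemma closes the argument.
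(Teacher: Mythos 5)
Your proposal is correct and follows essentially the same route as the paper: decompose the square into the three regions of \eqref{eq_Feeyeeyeepoiro8waiGai6ai}, use invariance of Sobolev maps and of traces under the (bi-Lipschitz, in fact affine) changes of variables to get membership in \(\sobolev^{1,p}\) on each piece together with the energy bound, match the one-sided traces on the two interfaces \(\set{x_{m-1}=x_m/2}\) and \(\set{x_{m-1}=x_m}\) using \eqref{eq_oox9jo6piekaiPhiex8iequo}, and conclude by the classical gluing lemma for Sobolev functions with equal traces. The only difference is that you make the Jacobian computation and the trace identifications more explicit than the paper does, which is harmless.
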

\begin{proof}
\begin{figure}
\includegraphics{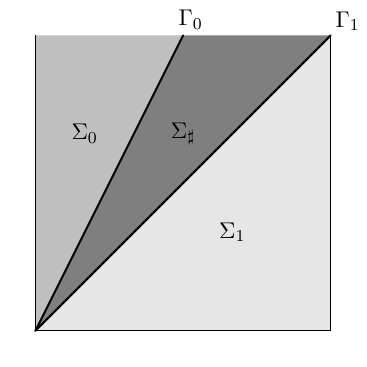}
\caption{The sets \(\Sigma_0\), \(\Sigma_\sharp\), \(\Sigma_1\), \(\Gamma_0\) and \(\Gamma_1\) appearing in the proof of \cref{proposition_folding_local}.}
\end{figure} 
Defining the sets 
\begin{align*}
  \Sigma_0 &\defeq \set{ \brk{y_1, y_2} \in \intvo{0}{1} \times \intvo{0}{1} \st y_1 < y_2/2},\\
  \Sigma_\sharp &\defeq \set{ \brk{y_1, y_2} \in \intvo{0}{1} \times \intvo{0}{1} \st y_2/2 < y_1 < y_2},\\
  \Sigma_1 &\defeq \set{ \brk{y_1, y_2} \in \intvo{0}{1} \times \intvo{0}{1} \st y_2 < y_1},
\end{align*}
we have by invariance under diffeomorphisms of Sobolev spaces
\(U_*\restr{\Sigma_0} \in  \sobolev^{1, p} \brk{\manifold{W} \times \Sigma_0, \manifold{N}}\), 
\(U_*\restr{\Sigma_\sharp} \in  \sobolev^{1, p} \brk{\manifold{W} \times \Sigma_\sharp, \manifold{N}}\) and
\(U_*\restr{\Sigma_\sharp} \in  \sobolev^{1, p} \brk{\manifold{W} \times \Sigma_1, \manifold{N}}\).
Moreover, defining the sets
\begin{align*}
   \Gamma_0 & \defeq \set{ \brk{y_1, y_2} \in \intvo{0}{1} \times \intvo{0}{1} \st y_1 = y_2/2},\\
  \Gamma_1 &\defeq \set{ \brk{y_1, y_2} \in \intvo{0}{1} \times \intvo{0}{1} \st y_1 = y_2},
\end{align*}
we have by definition of \(U_*\) in \eqref{eq_Feeyeeyeepoiro8waiGai6ai}, by the condition \eqref{eq_oox9jo6piekaiPhiex8iequo} and the invariance of trace under change of variables,
\begin{align*}
 \tr_{\Gamma_0} U_*\restr{\Sigma_0}
 &= \tr_{\Gamma_0} U_*\restr{\Sigma_\sharp}
 &\text{and}&
 &
 \tr_{\Gamma_1} U_*\restr{\Sigma_1} &= \tr_{\Gamma_1} U_*\restr{\Sigma_\sharp} .
\end{align*}
The conclusion follows from the gluing property of Sobolev functions having the same trace (see for example \cite{Leoni_2017}*{Th.\ 18.1}).
\end{proof}

\subsection{Folding through an open covering}
We now transfer the folding construction to an open covering of a compact manifold.

\begin{proposition}
\label{proposition_folding_global}
Let \(\smash{\manifold{M}'}\) be a compact Riemannian manifold, \(\brk{G_i}_{i \in I}\) a finite covering of \(\smash{\manifold{M}'}\) and let, for each \(i \in I\), \(U_i \in \sobolev^{1, p} \brk{G_i \times \intvo{0}{1}, \manifold{N}}\).
If for every \(i, j \in I\),
\[
 \tr_{\brk{G_i \cap G_j} \times \set{0}} U_i = \tr_{\brk{G_i \cap G_j} \times \set{0}} u_j,
\]
then
there exists \(U \in \sobolev^{1, p} \brk{G_i \times \intvo{0}{1},\manifold{N}}\) such that for every \(i \in I\),
\[
 \tr_{G_i \times \set{0}} U = \tr_{G_i \times \set{0}} U_i
\]
and
\[
 \int_{\smash{\manifold{M}'} \times \intvo{0}{1}} \abs{\Deriv u}^p
 \le C \sum_{i \in I} \int_{G_i \times \intvo{0}{1}} \abs{\Deriv u_i}^p,
\]
where the constant \(C\) only depends on \(p\) and \(\brk{G_i}_{i \in I}\).
\end{proposition}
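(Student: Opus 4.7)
The plan is to induct on $n = \abs{I}$, reducing the statement to a two-cover gluing and realising this gluing via \cref{proposition_folding_local} applied in a tubular neighbourhood of a smooth separating hypersurface contained in the overlap. The base case $n = 1$ is immediate by setting $U = U_1$.

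For the two-cover case $\manifold{M}' = A \cup B$ with extensions $U_A, U_B$ whose bottom traces match on $A \cap B$, produce a smooth $\chi \colon \manifold{M}' \to \intvc{0}{1}$ that equals $1$ on a neighbourhood of the closed set $\manifold{M}' \setminus A$ and equals $0$ on a neighbourhood of $\manifold{M}' \setminus B$; such a cutoff exists since these two closed sets are disjoint. Select via Sard's theorem a regular value $c \in \intvo{0}{1}$, so that $\Sigma \defeq \chi^{-1}\brk{c}$ is a smooth compact embedded hypersurface with $\Sigma \subset A \cap B$, $\set{\chi < c} \subset A$, and $\set{\chi > c} \subset B$. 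Fix a tubular neighbourhood $T$ of $\Sigma$ with $\overline{T} \subset A \cap B$, and a diffeomorphism $\Phi \colon \Sigma \times \intvo{0}{1} \to T$ placing the $A$-side of $\partial T$ at $\Sigma \times \set{0}$ and the $B$-side at $\Sigma \times \set{1}$. Pulling $U_A$ and $U_B$ restricted to $T \times \intvo{0}{1}$ back through $\Phi \times \id_{\intvo{0}{1}}$ gives two maps in $\sobolev^{1,p}\brk{\Sigma \times \intvo{0}{1} \times \intvo{0}{1}, \manifold{N}}$ with matching bottom traces, and \cref{proposition_folding_local} produces a merged $U_*$ on $T \times \intvo{0}{1}$ whose traces at the two boundary components of $T$ agree respectively with those of $U_A$ and $U_B$. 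Concatenating $U_A$ on $\brk{\set{\chi < c} \setminus T} \times \intvo{0}{1}$, $U_*$ on $T \times \intvo{0}{1}$, and $U_B$ on $\brk{\set{\chi > c} \setminus T} \times \intvo{0}{1}$ yields an element of $\sobolev^{1,p}\brk{\manifold{M}' \times \intvo{0}{1}, \manifold{N}}$ by the Sobolev gluing property across matching-trace interfaces, with the correct bottom trace on each piece and the energy bound guaranteed by \cref{proposition_folding_local} together with the confinement $T \subset A \cap B$.

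For general $n \ge 2$, shrink the cover to $\brk{G_i'}_{i \in I}$ with $\overline{G_i'} \subset G_i$ still covering $\manifold{M}'$, pick $i_0 \in I$, and apply the inductive hypothesis to the cover $\brk{G_i}_{i \ne i_0}$ on a suitable relatively compact open neighbourhood of the compact set $\manifold{M}' \setminus G_{i_0}'$; this produces an intermediate extension $U_\sharp$ whose bottom trace on each $G_i$ for $i \ne i_0$ agrees with that of $U_i$. The overlap hypothesis guarantees that $U_{i_0}$ and $U_\sharp$ have matching bottom traces on their intersection, so the two-cover merge above produces the global $U$ with the claimed trace condition and energy bound; iterating the estimate $n - 1$ times yields the constant depending on $n$ and $\brk{G_i}_{i \in I}$.

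The main obstacle is keeping the induction technically clean, since the intermediate open set $\bigcup_{i \ne i_0} G_i$ is not itself a compact Riemannian manifold and the inductive hypothesis does not apply verbatim. The natural remedy is to strengthen the statement to allow $\manifold{M}'$ to be a relatively compact open subset of a fixed ambient closed Riemannian manifold, in which case the base case remains trivial and the two-cover folding step is unchanged, as it uses only compactness of the separating hypersurface $\Sigma \subset A \cap B$ rather than compactness of $A$ or $B$ individually.
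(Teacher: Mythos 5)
Your two-cover merge over a genuinely compact $\manifold{M}'$ is sound and uses exactly the same tool as the paper (the folding of \cref{proposition_folding_local} across a transition region inside the overlap, followed by gluing along interfaces with matching traces). The gap is in the inductive step, and it sits precisely at the point you try to dismiss in your last paragraph. After strengthening the statement so that the domain is only a relatively compact open subset $\Omega$ of a closed ambient manifold, the separating hypersurface produced by your own construction, $\Sigma = \chi^{-1}\brk{c}$ with $\chi$ a cutoff defined on $\Omega$, is merely closed \emph{in} $\Omega$; it has no reason to be compact, and in general it cannot be chosen compact. Concretely: take $\manifold{M}' = \Sset^2$ covered by two polar caps (grouped into one $G_{i_0}$) and two overlapping ``half-annuli'' $A$, $B$; after the first reduction the recursive call must merge $U_A$ and $U_B$ over an open annulus $\Omega$, and $A \cap B \cap \Omega$ consists of two strips running from one boundary circle of $\Omega$ to the other. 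Any hypersurface separating $\Omega \setminus A$ from $\Omega \setminus B$ inside $A \cap B$ must cross each strip and hence accumulates on $\partial \Omega$: no compact choice of $\Sigma$ exists, so the claim that ``the two-cover folding step is unchanged, as it uses only compactness of $\Sigma$'' is not available.

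This is not merely cosmetic. With a non-compact $\Sigma$ the tubular neighbourhood $T \subset A \cap B$ must pinch, and the fold genuinely degenerates there: after rescaling the variable width $\delta\brk{z}$ of $T$ to the unit interval, the formula of \cref{proposition_folding_local} trades the collar variable (of unit depth) against the normal variable, so the folded map picks up derivative contributions of size $\abs{\partial_t U}/\delta\brk{z}$; neither membership of the glued map in $\sobolev^{1,p}$ with finite energy nor a constant depending only on the covering survives. The paper's induction is designed around exactly this obstacle: it never produces, and never needs to glue along, a lateral boundary of the transition region. Working in a chart sending $\Bar{G}_i$ to $\Bar{B}_1$, \cref{lemma_cone} selects a cone $C_i$ and a radius $r_i$ such that the folding is performed only on the conical annulus $C_i \cap \brk{\Bar B_1 \setminus B_{r_i}}$, whose lateral boundary is simply omitted from the new domain $H_i$; the cone lemma guarantees that the omitted region is covered by the later sets $G_j$, so only the two spherical interfaces, where traces match, ever have to be glued, and all constants come from the fixed radii $r_i$. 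To rescue your scheme you would have to redesign the strengthened statement (for instance, extending over a neighbourhood of a compact set whose \emph{closure} is covered, and truncating the transition region) so that every merge is performed across a compact interface with a uniform-width tube, which is substantial additional work rather than an unchanged repetition of the two-cover case; a minor related point is that your shrinking $\overline{G_i'} \subset G_i$ with ambient closures is also unavailable once the domain is only relatively compact.
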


\Cref{proposition_folding_global} will follow from \cref{proposition_folding_local} and the following topological lemma.

\begin{lemma}
\label{lemma_cone}
If \(G \subseteq \Rset^m\) is open, if \(F \subseteq \Bar{B}_1\) is closed
and if
\[
 F \cap \partial B_1 \subseteq G,
\]
then there exists an open cone \(C \subseteq \Rset^m\) and some \(r \in \intvo{0}{1}\) such that
\begin{equation}
\label{eq_Em9aequ8Oom3eighohb9ro4w}
F \setminus B_r \subseteq C \cap \brk{\Bar{B}_1 \setminus B_r}
\subseteq G.
\end{equation}
\end{lemma}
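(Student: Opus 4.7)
The plan is to build the cone $C$ as a radial thickening of $K \defeq F \cap \partial B_1$. Since $F$ is closed in $\bar{B}_1$, the set $K$ is compact, and by hypothesis $K \subseteq G$; compactness of $K$ together with openness of $G$ therefore furnish some $\delta \in \intvo{0}{1}$ such that the $\delta$-neighbourhood of $K$ in $\Rset^m$ lies entirely in $G$. This is the safety margin within which all subsequent choices must fit.

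For a parameter $\epsilon > 0$ to be determined, I would then define
\[
 K'_\epsilon \defeq \set{v \in \partial B_1 \st \dist \brk{v, K} < \epsilon}
 \quad \text{and} \quad
 C \defeq \set{t v \st v \in K'_\epsilon,\; t > 0},
\]
so that $C$ is an open cone in $\Rset^m \setminus \set{0}$. The right-hand inclusion in \eqref{eq_Em9aequ8Oom3eighohb9ro4w} is then nearly immediate: any $x \in C \cap \brk{\bar{B}_1 \setminus B_r}$ writes as $x = \abs{x} \cdot \brk{x/\abs{x}}$ with $x/\abs{x} \in K'_\epsilon$, so choosing $v' \in K$ within distance $\epsilon$ of $x/\abs{x}$ gives $\abs{x - v'} \le (1 - r) + \epsilon$; imposing $\epsilon < \delta/2$ together with $1 - r < \delta/2$ keeps $x$ in the $\delta$-neighbourhood of $K$, hence in $G$.

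The left-hand inclusion $F \setminus B_r \subseteq C$ is the genuine content of the lemma. It reduces to the claim that for $x \in F$ with $\abs{x}$ sufficiently close to $1$, the radial projection $x/\abs{x}$ necessarily lies in $K'_\epsilon$. I would establish this by contradiction: otherwise there is a sequence $x_n \in F$ with $\abs{x_n} \to 1$ and $\dist \brk{x_n/\abs{x_n}, K} \ge \epsilon$, from which compactness of $\bar{B}_1$ extracts a subsequence converging to some $x^* \in F \cap \partial B_1 = K$; but then $x_n/\abs{x_n} \to x^*$, contradicting the uniform lower bound on the distance. This delivers an $r_0 \in \intvo{0}{1}$ below which $F$ radially projects into $K'_\epsilon$, and it suffices to set $r \defeq \max \brk{r_0, 1 - \delta/2}$. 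The closest thing to an obstacle is sequencing the parameter choices consistently: first $\delta$ from the data, then $\epsilon < \delta/2$, and finally $r$ close enough to $1$ both for the compactness step with this $\epsilon$ and for $1 - r < \delta/2$. Beyond this bookkeeping, the argument is purely geometric, which fits the role of \cref{lemma_cone} as a topological preparation for \cref{proposition_folding_global}.
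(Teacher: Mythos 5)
Your proof is correct, but it takes a genuinely different route from the paper's. You construct the cone from the boundary trace of \(F\): setting \(K \defeq F \cap \partial B_1\), you choose a safety margin \(\delta\) with the \(\delta\)-neighbourhood of \(K\) inside \(G\), cone over the relatively open spherical neighbourhood \(K'_\epsilon\), and then check the two inclusions separately — the inclusion \(C \cap \brk{\Bar{B}_1 \setminus B_r} \subseteq G\) by a triangle-inequality estimate, and \(F \setminus B_r \subseteq C\) by a sequential-compactness contradiction; the parameters must then be ordered as you describe (\(\delta\), then \(\epsilon < \delta/2\), then \(r\)). The paper instead builds the cone from \(G\): for each \(r\) it defines \(C_r\) as the set of points \(x\) whose radial segment through \(\Bar{B}_1 \setminus B_r\) lies entirely in \(G\), so that \(C_r \cap \brk{\Bar{B}_1 \setminus B_r} \subseteq G\) holds by construction, observes that \(r \mapsto B_r \cup C_r\) is a monotone family of open sets whose union contains \(F\) precisely because \(F \cap \partial B_1 \subseteq G\), and extracts a single admissible \(r\) by compactness of \(F\). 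The paper's maximal-cone exhaustion is softer and shorter — one covering argument replaces both of your verification steps and all \(\epsilon\)--\(\delta\) bookkeeping — whereas your construction is more explicit and quantitative, exhibiting the cone as the radial thickening of \(F \cap \partial B_1\) with traceable parameters. Two harmless points you pass over in silence: your \(C\) omits the origin, which is irrelevant since only the annulus \(\Bar{B}_1 \setminus B_r\) with \(r > 0\) matters; and when \(F \cap \partial B_1 = \emptyset\) your cone is empty, a degenerate case in which the claim is anyway trivial (and which the paper's \(C_r\) may equally produce).
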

\begin{figure}
\includegraphics{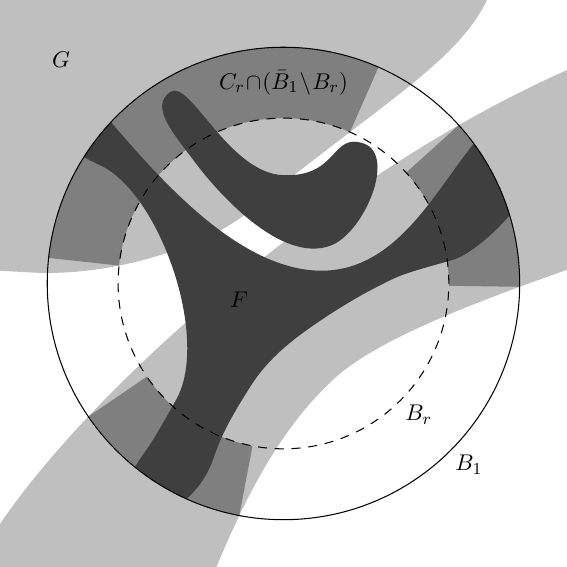}
\caption{The construction of \cref{lemma_cone}: given a open set \(G\) and a closed set \(F\) such that \(F \cap \partial B_1 \subseteq G\), one can find a radius \(r \in \intvo{0}{1}\) so that \(F \setminus B_r\) is contained in a subset \(C \cap \brk{\Bar{B}_1 \setminus B_r}\) of \(G\) where \(C\) is a cone.}
\end{figure}

\begin{proof}
For every \(r \in \intvo{0}{1}\), we define the cone
\[
 C_r
 \defeq
 \set{x \in \Rset^m \st \Rset x \cap \brk{\Bar{B}_1 \setminus B_r} \subseteq G}.
\]
Since the set \(G\) is open, the cone \(C_r\) is also an open set
and satisfies
\[
 C_r \cap \brk{\Bar{B}_1 \setminus B_r} \subseteq G.
\]
If \(0 < r \le s < 1\), one has \(C_r\subseteq C_s\). Moreover, since \(G\) is open
\[
 \bigcup_{r \in \intvo{0}{1}} C_r
 = C_1 \defeq \set{x \in \Rset^m \st \Rset x \cap \partial B_1 \subseteq G}
\]
and \(
 F \cap \partial B_1 \subseteq G \cap \partial B_1
 = C_1 \cap \partial B_1\).
We have thus
\[
 F \subseteq B_1 \cup C_1 = \bigcup_{r \in \intvo{0}{1}} B_r \cup C_r.
\]
By compactness and mononoticity, there exists \(r \in \intvo{0}{1}\), such that
\(
 F \subseteq B_r \cup C_r
\)
and the conclusion \eqref{eq_Em9aequ8Oom3eighohb9ro4w} follows with \(C_r = C\).
\end{proof}

\begin{proof}[Proof of \cref{proposition_folding_global}]
Since the manifold \(\smash{\manifold{M}'}\) is compact, we can assume that for each \(i \in I =  \set{1, \dotsc, K}\), there is an open set \(W_i \subseteq \smash{\manifold{M}'}\) and a diffeomorphism \(\Psi_i \colon W_i \subseteq \manifold{M} \to \Rset^{m - 1}\) such that \(\Bar{G}_i \subseteq W_i\) and
\(\Psi \brk{\Bar{G}_i} = \Bar{B}_1\).

We are going to define inductively over \(i \in \set{1, \dotsc, K}\), open sets \(H_i \subseteq G_i\) and mappings \(V_i \in \sobolev^{1, p} \brk{H_i \times \intvo{0}{1}, \manifold{N}}\)
such that for every \(j \in \set{1, \dotsc, K}\)
\[
  \tr_{\brk{H_i \cap G_j} \times \set{0}} V_i = \tr_{\brk{H_i \cap G_j} \times \set{0}} U_j
\]
and
\begin{equation}
\label{eq_Ahsi0sheiha6goo3kofoh1aF}
H_i \cup \bigcup_{j = i + 1}^K  G_j = \manifold{M}.
\end{equation}
We first set \(H_1 \defeq G_1\) and \(V_1 \defeq U_1\).
Next, assuming that the set \(H_{i - 1}\) and the map \(V_{i - 1}\) have been defined for some \(i \in \set{2, \dotsc, K}\), we consider the closed set
\[
  F_{i} \defeq
  \Psi_{i} \brk[\bigg]{\Bar{G}_{i} \setminus \bigcup_{j  = i + 1}^{K} G_j} = \Bar{B}_1
  \setminus  \bigcup_{j  = i + 1}^{K} \Psi_i \brk{W_i \cap G_j}
  \subseteq \Bar{B}_1
\]
and the open set
\[
  E_i \defeq \Psi_{i} \brk{W_{i} \cap H_{i - 1}}.
\]
\begin{figure}
\includegraphics{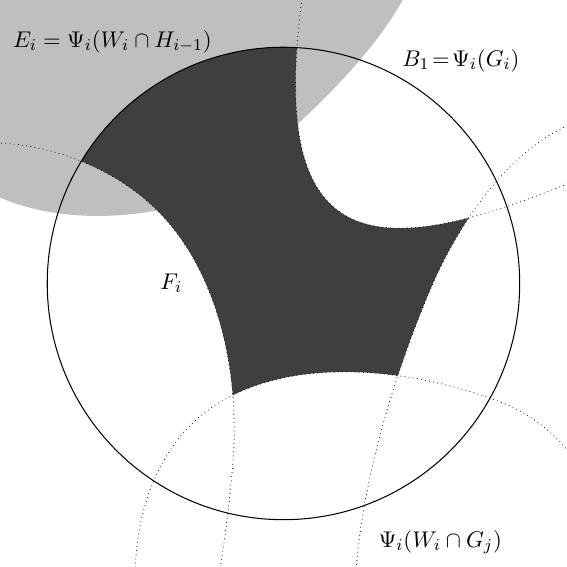}
\caption{The closed set \(F_i =  \Bar{B}_1
  \setminus  \bigcup_{j  = i + 1}^{K} \Psi_i \brk{W_i \cap G_j}\) and the open set \(E_i = \Psi_{i} \brk{W_{i} \cap H_{i - 1}}\) defined in the proof of \cref{proposition_folding_global}.}
\end{figure}%
We have, by \eqref{eq_Ahsi0sheiha6goo3kofoh1aF},
\[
 \partial G_i \subseteq
 W_i \setminus G_i
 \subseteq
 \brk{W_i \cap H_{i - 1}} \cup \bigcup_{j = i + 1}^{K}\brk{W_i \cap G_j},
\]
and thus
\begin{equation}
\label{eq_aMeSux4ga4Jiajee4fik8Gio}
 F_i \cap \partial B_1 =
  \Psi_{i} \brk{\partial G_i} \setminus \bigcup_{j  = i + 1}^{K}
  \Psi_i \brk{W_i \cap G_j}
  \subseteq \Psi_{i}  \brk{W_{i} \cap H_{i - 1}} = E_i.
\end{equation}
In view of \eqref{eq_aMeSux4ga4Jiajee4fik8Gio},
\cref{lemma_cone} applies to the open set \(E_i\) and the closed set \(F_i\) and gives a cone \(C_{i} \subseteq \Rset^m\) and a radius \(r_{i} \in \intvo{0}{1}\) such that
\begin{equation*}
F_i \setminus B_{r_i} \subseteq C_i \cap \brk{\Bar{B}_1 \setminus B_{r_i}}
\subseteq E_i.
\end{equation*}
In particular, we have
\(
  C_i \cap \partial B_1 \subseteq E_i,
\)
and thus
\[
\begin{split}
  \brk{C_i \cap \Bar{B}_{1}} \cup \brk{E_i \setminus \Bar{B}_1}
  &=\brk{C_i \cap B_{1}} \cup \brk{C_i \cap E_i \cap \partial B_1} \cup \brk{E_i \setminus \Bar{B}_1}\\ 
  &= \brk{C_i \cup \brk{G_i \setminus \Bar{B}_1}}
  \cap \brk{\brk{C_i \cap B_1} \cup G_i}
  \end{split}
\]
so that the set \(B_{r_i} \cup \brk{C_i \cap \Bar{B}_{1}} \cup \brk{E_i \setminus \Bar{B}_1}\) is open.

We define the set
\[
 H_{i} \defeq \Psi_{i}^{-1} \brk{B_{r_{i}}}
 \cup \Psi_{i}^{-1} \brk{C_{i} \cap \brk{\Bar{B}_1 \setminus B_{r_{i}}}} \cup \brk{H_{i - 1} \setminus \Psi_{i}^{-1} \brk{\Bar{B}_1}}.
\]
We define \(V_{i} = V_{i - 1}\) on
\(
\brk{H_i \setminus \Psi_{i}^{-1} \brk{\Bar{B}_1}}\),
\(V_{i} = U_{i}\)
on \(\Psi_{i}^{-1} \brk{B_{r_{i}}}\).
On the set \( \Psi_{i}^{-1} \brk{C_{i} \cap \brk{\Bar{B}_1 \setminus B_{r_{i}}}}\) we take \(V_i\) so that \(V_{i} \compose \Psi_i^{-1} \compose \Phi_i^{-1}\)
is the map given by \cref{proposition_folding_local}
to the mappings \(U_i \compose \Psi_i^{-1} \compose \Phi_i^{-1}\)
and \(V_{i -1} \compose \Psi_i^{-1} \compose \Phi_i^{-1}\),
where the mapping
\(\Phi_i \colon \partial B_1 \times \intvo{0}{1} \to B_1 \setminus \Bar{B}_{r_i}\) is defined as 
\[
 \Phi_i \brk{z', z_m} = \brk{1 - z_m r_i} z'.
\]
By the properties of traces, this closes the inductive step.

The estimates can be obtained by observing that the each steps comes with inequalities that only depend on the radii \(r_i\) appearing in the topological argument, and thus only on the covering \(G_1, \dotsc, G_K\) and the charts \(\Psi_1, \dotsc, \Psi_K\),
and are thus independent of the mappings \(U_1, \dotsc, U_K\).
\end{proof}

\begin{proof}
[Proof of \cref{theorem_local_collar}]
If for every \(i \in I\),
\(u \restr{G_i} \in \tr_{G_i}
 \sobolev^{1, p} \brk{G_i\times \intvr{0}{1}, \manifold{N}}
\), then there exists \(U_i \in \sobolev^{1, p} \brk{G_i\times \intvr{0}{1}, \manifold{N}}\) such that \(\tr_{G_i} U_i = u \restr{G_i}\).
In particular, for every \(i, j \in I\),
\[
 \tr_{G_i \cap G_j} U_i = u \restr{G_i \cap G_J}
 = \tr_{G_i \cap G_j} U_j.
\]
Letting \(U \in \sobolev^{1, p} \brk{\manifold{M}'\times \intvr{0}{1}, \manifold{N}}\) be given by \cref{proposition_folding_global}, we have for every \(i \in I\),
\(
  \tr_{\manifold{M}'} U \restr{G_i}
  = u\restr{G_i}
\) and thus \(\tr_{\manifold{M}'} U
  = u\).
\end{proof}

\subsection{Extending penalised energies}

The proof of \cref{theorem_local_penalised} follows the one of \cref{theorem_local_collar}.
The folding construction of \cref{proposition_folding_local}, can be adapted to 

\begin{proposition}
\label{proposition_folding_penal}
Given a manifold \(\manifold{W}\) and \(U_0, U_1\colon
\manifold{W} \times \intvo{0}{1} \times \intvo{0}{1} \to  \Rset^\nu\),
we define
\(U_* \colon \manifold{W} \times \intvo{0}{1} \times \intvo{0}{1} \to  \Rset^\nu\) 
for each \(x = \brk{x', x_{m - 1}, x_{m}} \in \manifold{W} \times \intvo{0}{1} \times \intvo{0}{1}\)
by
\begin{equation*}
 U_* \brk{x', x_{m - 1}, x_{m}}
 \defeq
 \begin{cases}
  U_0 \brk{x', 2x_{m - 1}, x_m - 2 x_{m - 1}} & \text{if \(x_{m - 1} \le x_m/2\)},\\
  U_1 \brk{x', x_m, 2x_{m - 1} - x_m} & \text{if \(x_m/2 < x_{m - 1} \le x_m\)},\\
  U_1 \brk{x', x_{m - 1}, x_{m}} & \text{if \(x_m < x_{m - 1}\)}.
 \end{cases}
\end{equation*}
If \(p \in \intvr{1}{\infty}\), if \(U_0, U_1
\in \sobolev^{1, p} \brk{\manifold{W} \times \intvo{0}{1} \times \intvo{0}{1}, \Rset^\nu}\)
and if
\begin{equation*}
  \tr_{\manifold{W} \times \intvo{0}{1} \times \set{0}} U_0
  = \tr_{\manifold{W} \times \intvo{0}{1} \times \set{0}} U_1,
\end{equation*}
then
\(U_* \in \sobolev^{1, p} \brk{\manifold{W} \times \intvo{0}{1} \times \intvo{0}{1}, \Rset^\nu}\),
\begin{gather*}
   \tr_{\manifold{W} \times \intvo{0}{1} \times \set{0}} U_*
   = \tr_{\manifold{W} \times \intvo{0}{1} \times \set{0}} U_0
  = \tr_{\manifold{W} \times \intvo{0}{1} \times \set{0}} U_1,\\
  \tr_{\manifold{W} \times \set{0} \times \intvo{0}{1}} U_*
  = \tr_{\manifold{W} \times \set{0} \times \intvo{0}{1}} U_0\\
  \tr_{\manifold{W} \times \set{1} \times \intvo{0}{1}} U_*
  = \tr_{\manifold{W} \times \set{1} \times \intvo{0}{1}} U_1,
\end{gather*}
and for every Borel-measurable function \(F \colon \Rset^\nu \to \intvc{0}{\infty}\),
\begin{equation*}
\begin{split}
& \int_{\manifold{W} \times \intvo{0}{1} \times \intvo{0}{1}}
 \abs{\Deriv U_*}^p + F \brk{U_*}\\
 &\qquad \le
 C
 \brk[\bigg]{
 \int_{\manifold{W} \times \intvo{0}{1} \times \intvo{0}{1}}
 \abs{\Deriv U_0}^p + F \brk{U_0}
 +
 \int_{\manifold{W} \times \intvo{0}{1} \times \intvo{0}{1}}
 \abs{\Deriv U_1}^p + F \brk{U_1}},
\end{split}
\end{equation*}
where the constant \(C\) only depends on \(p\).
\end{proposition}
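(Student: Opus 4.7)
The plan is to follow the proof of \cref{proposition_folding_local} almost verbatim, and to supplement the Sobolev estimate by a short change-of-variables argument for the penalty term. First I would decompose $\manifold{W} \times \intvo{0}{1} \times \intvo{0}{1}$ into the three open pieces $\Sigma_0$, $\Sigma_\sharp$, $\Sigma_1$ separated by the interfaces $\Gamma_0$ and $\Gamma_1$, as in the proof of \cref{proposition_folding_local}. On each piece, the formula defining $U_*$ is the composition of $U_0$ or $U_1$ with an affine diffeomorphism of the $(x_{m-1}, x_m)$-variables whose image lies in $\intvo{0}{1} \times \intvo{0}{1}$ and whose Jacobian determinant is a universal constant ($\pm 2$, $\pm 2$ and $1$ respectively). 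This gives both the membership $U_*\restr{\Sigma_j} \in \sobolev^{1, p}$ and the corresponding Sobolev estimate exactly as before.

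For the penalty term, the same change of variables yields, for instance on $\Sigma_0$,
\[
 \int_{\manifold{W} \times \Sigma_0} F \compose U_*
 = \tfrac12 \int_{\manifold{W} \times \Psi_0 \brk{\Sigma_0}} F \compose U_0
 \le \tfrac12 \int_{\manifold{W} \times \intvo{0}{1}\times \intvo{0}{1}} F \compose U_0,
\]
where $\Psi_0$ denotes the corresponding affine map, using that $F$ is Borel-measurable and nonnegative; the estimates on $\Sigma_\sharp$ and $\Sigma_1$ are analogous. Summing these with the three Sobolev estimates gives the desired bound on $\int \abs{\Deriv U_*}^p + F \brk{U_*}$, with a constant depending only on $p$.

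The trace identifications on $\Gamma_0$ and $\Gamma_1$ are obtained from the hypothesis on the traces of $U_0$ and $U_1$ along $\manifold{W} \times \intvo{0}{1} \times \set{0}$ transported by the affine changes of variable, exactly as in \cref{proposition_folding_local}; the traces on the three faces appearing in the statement can be read off directly from the piecewise formula. The gluing property of Sobolev functions with matching traces (see \cite{Leoni_2017}*{Th.\ 18.1}) then assembles the three restrictions into a single $U_* \in \sobolev^{1, p} \brk{\manifold{W} \times \intvo{0}{1} \times \intvo{0}{1}, \Rset^\nu}$. There is no substantive obstacle beyond what was already dealt with in \cref{proposition_folding_local}; the only new point worth emphasising is that because the changes of variable are affine, they transport each $\int F \brk{U_j}$ with a universal multiplicative factor, so the presence of the penalty adds nothing qualitatively new.
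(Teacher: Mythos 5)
Your proposal is correct and follows exactly the route the paper intends: the paper gives no separate proof of this proposition, merely noting that the folding construction of \cref{proposition_folding_local} ``can be adapted'', and your argument is precisely that adaptation — the same decomposition into $\Sigma_0$, $\Sigma_\sharp$, $\Sigma_1$ with matching traces on $\Gamma_0$, $\Gamma_1$ and gluing via \cite{Leoni_2017}*{Th.\ 18.1}, supplemented by the (correct) observation that the affine changes of variables, having constant Jacobian determinants $\pm 2$, $\pm 2$, $1$, transport the penalty integrals $\int F\brk{U_j}$ with universal factors.
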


One can then proceed to get the counterpart of \cref{proposition_folding_global}

\begin{proposition}
\label{proposition_folding_penal_global}
Let \(\smash{\manifold{M}'}\) be a compact Riemannian manifold, \(\brk{G_i}_{i \in I}\) a finite covering of \(\smash{\manifold{M}'}\) and let for each \(i \in I\), \(U_i \in \sobolev^{1, p} \brk{G_i \times \intvo{0}{1}, \Rset^{\nu}}\).
If for every \(i, j \in I\),
\[
 \tr_{\brk{G_i \cap G_j} \times \set{0}} U_i = \tr_{\brk{G_i \cap G_j} \times \set{0}} u_j,
\]
then
there exists \(U \in \sobolev^{1, p} \brk{G_i \times \intvo{0}{1},\Rset^\nu}\) such that for every \(i \in I\),
\[
 \tr_{G_i \times \set{0}} U = \tr_{G_i \times \set{0}} U_i
\]
and
\[
 \int_{\smash{\manifold{M}'} \times \intvo{0}{1}} \abs{\Deriv U}^p + F \brk{U}
 \le C \sum_{i \in I} \int_{G_i \times \intvo{0}{1}} \abs{\Deriv U_i}^p + F \brk{U_i},
\]
where the constant \(C\) only depends on \(p\) and \(\brk{G_i}_{i \in I}\).
\end{proposition}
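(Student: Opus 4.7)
The plan is to mimic the proof of \cref{proposition_folding_global} step by step, using \cref{proposition_folding_penal} in place of \cref{proposition_folding_local} and carrying the penalty term \(F(V)\) along the induction. I reuse the charts \(\Psi_i \colon W_i \to \Rset^{m-1}\) with \(\Psi_i(\Bar{G}_i) = \Bar{B}_1\), and build inductively, for \(i \in I = \set{1, \dotsc, K}\), open sets \(H_i \subseteq \smash{\manifold{M}'}\) satisfying \(H_i \cup \bigcup_{j > i} G_j = \smash{\manifold{M}'}\) together with mappings \(V_i \in \sobolev^{1,p}(H_i \times \intvo{0}{1}, \Rset^\nu)\) matching the traces of the \(U_j\) on \((H_i \cap G_j) \times \set{0}\). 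Starting from \(V_1 \defeq U_1\), at each inductive step I apply \cref{lemma_cone} to obtain a cone \(C_i\) and radius \(r_i\) as in the proof of \cref{proposition_folding_global}, and on the conical annulus \(\Psi_i^{-1}(C_i \cap (\Bar{B}_1 \setminus B_{r_i}))\) I apply \cref{proposition_folding_penal} in the coordinates \(\Psi_i^{-1} \circ \Phi_i^{-1}\) to the pair \((U_i, V_{i-1})\). The desired map is then \(U \defeq V_K\), whose trace identifications follow from the corresponding identifications in \cref{proposition_folding_penal}.

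For the penalised estimate, the key input from \cref{proposition_folding_penal} is the simultaneous control
\[
 \int \abs{\Deriv U_*}^p + F \brk{U_*}
 \le C \brk[\bigg]{\int \abs{\Deriv U_0}^p + F \brk{U_0} + \int \abs{\Deriv U_1}^p + F \brk{U_1}}.
\]
Since the charts \(\Psi_i\) and the conical changes of variables \(\Phi_i\) are bilipschitz with Jacobians bounded above and below by constants depending only on the covering \(\brk{G_i}_{i \in I}\) and on the fixed charts, the \(F(V)\) integral transforms under pullback by a bounded factor, exactly as the gradient integral does. Iterating the above inequality \(K\) times yields the bound
\[
 \int_{\smash{\manifold{M}'} \times \intvo{0}{1}} \abs{\Deriv U}^p + F \brk{U}
 \le C' \sum_{i \in I} \int_{G_i \times \intvo{0}{1}} \abs{\Deriv U_i}^p + F \brk{U_i},
\]
with \(C'\) depending only on \(p\) and \(\brk{G_i}_{i \in I}\).

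I expect the main work, if any, to lie in establishing \cref{proposition_folding_penal}, the new ingredient of this section. Its proof proceeds by the same decomposition into the three regions \(\Sigma_0\), \(\Sigma_\sharp\), \(\Sigma_1\) and the same gluing argument as \cref{proposition_folding_local}; the only additional observation needed is that on each region the affine change of variables appearing in \eqref{eq_Feeyeeyeepoiro8waiGai6ai} has unit-order Jacobian, so that the \(F\)-integrals of the restrictions of \(U_*\) to \(\Sigma_0\), \(\Sigma_\sharp\), \(\Sigma_1\) are controlled by those of \(U_0\) and \(U_1\). Beyond this routine change-of-variables check no new obstacle arises: the penalty \(F(V)\) is pointwise in \(V\) and behaves like the integrand of an ordinary \(L^1\) norm, so it transports along bilipschitz pushforwards with bounded Jacobian exactly as the \(L^p\) gradient term does.
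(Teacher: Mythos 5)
Your proposal is correct and coincides with the route the paper intends: the paper leaves this proof implicit precisely because it is obtained by rerunning the inductive construction of \cref{proposition_folding_global} (charts, \cref{lemma_cone}, conical annuli) with \cref{proposition_folding_penal} substituted for \cref{proposition_folding_local}, the penalty term \(F\) being carried through the bilipschitz changes of variables exactly as you describe. Your observation that the affine foldings and the maps \(\Psi_i^{-1}\compose\Phi_i^{-1}\) have Jacobians bounded in terms of the fixed covering and charts, so that the \(F\)-integral transports like the gradient term, is the only point that needs checking, and you have checked it.
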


\section{Global results}

\Cref{theorem_global} follows from the characterization of traces of Mazowiecka and the author \cite{Mazowiecka_VanSchaftingen_2023} and from topological screening methods of Bousquet, Ponce and the author \cite{Bousquet_Ponce_VanSchaftingen}.

\begin{proof}[Proof of \cref{theorem_global}]
If \ref{it_phue3au9shah5mo2Eexuu1se} holds,
a restriction to a collar neighbourhood shows that
\(
  u \in \tr_{\partial \manifold{M}}
 \sobolev^{1, p} \brk{\partial \manifold{M} \times \intvo{0}{1}, \manifold{N}}
\).
Following an argument of \cite{Bousquet_Ponce_VanSchaftingen}*{Prop.\ 9.4}, by Mazowiecka and Van Schaftingen’s characterisation of traces of Sobolev spaces \cite{Mazowiecka_VanSchaftingen_2023}*{Th.\ 1.3 and Pr.\ 3.4 } we can assume that \(w\) was chosen so that for every \(\floor{p}\)-dimensional simplicial complex \(\Sigma\), every \(\floor{p - 1}\)-dimensional subcomplex \(\Sigma_0 \subseteq \Sigma\) and every Lipschitz-continuous mapping \(\sigma \colon \Sigma \to \manifold{M}\) satisfying \(\sigma \brk{\Sigma_0}\subseteq \partial \manifold{M} \) and \(\int_{\Sigma_0} w \compose \sigma < \infty\), one has
\(u \restr{\Sigma_0} = \tr_{\Sigma_0} V\) for some \(V \in \sobolev^{1, p} \brk{\Sigma, \manifold{N}}\).
Assuming without loss of generality that \(\manifold{M}\) is embedded in \(\Rset^\mu\) with \(\manifold{M} \subseteq \Rset^\mu_+\) and \(\partial \manifold{M} = \manifold{M} \cap \partial \Rset^\mu_+\) and that there is a smooth \(\Pi_{\manifold{M}} \colon \manifold{U} \to \manifold{M}\) defined on a relative neighbourhood \(\manifold{U}\) of \(\manifold{M}\) in \(\Rset^\mu_+\) such that \(\Pi_{\manifold{M}} \restr{\manifold{M}} = \id\) and \(\Pi_{\manifold{M}} \brk{\manifold{U} \cap \partial \Rset^\mu_+} = \partial\manifold{M}\) \cite{Mazowiecka_VanSchaftingen_2023}*{Prop.\ 3.1}, for \(\delta > 0\) small enough and for \(\xi \in B_{\delta} \subseteq \partial \Rset^\mu_+\), the map \(\sigma_{\xi}
\defeq \Pi_{\manifold{M}} \compose \brk{\id_{\manifold{M}^{\floor{p}}} + \xi} \) is Lipschitz-continuous and satisfies
\[
 \int_{B_{\delta}} \brk[\bigg]{\int_{\partial \manifold{M}^{\floor{p - 1}}} w \compose \sigma_\xi} \dif \xi < \infty,
\]
and taking for \(\ell \in \set{0, \dotsc, \dim \manifold{M}}\) \(\Pi_{\manifold{M}} \brk{\manifold{M}^\ell + \xi}\) provides the required triangulations and \ref{it_etho8eatei8xoomea2ahS7vo} holds.

If \ref{it_etho8eatei8xoomea2ahS7vo} holds, then we have and we are given a Borel-measurable function \(w \colon \partial \manifold{M} \to \intvc{0}{\infty}\) such that \(\int_{\partial \manifold{M}} w < \infty\), then
\[
 \int_{B_{\delta}} \brk[\bigg]{\int_{\manifold{M}^{\floor{p - 1}}} w \compose \sigma_\xi} \dif \xi < \infty.
\]
and \ref{it_aikai2Mepohn8sui5Kaigha7} holds.

If \ref{it_aikai2Mepohn8sui5Kaigha7} holds, by the criterion of Sobolev extension \cite{Mazowiecka_VanSchaftingen_2023}*{Th.\ 1.3}, there exists
\(w \colon \partial \manifold{M} \to \intvc{0}{\infty}\) such that \(\int_{\partial \manifold{M}} w < \infty\) and if \(\Sigma\) is \(\floor{p}\)-dimensional simplicial complex, if \(\Sigma_0\) is a \(\floor{p - 1}\)-dimensional subcomplex,
if \(\sigma \brk{\Sigma} \subseteq \partial \manifold{M}\) and if \(\int_{\Sigma_0} w \compose \sigma < \infty\),
then \(u \compose \sigma\restr{\Sigma_0}\) is homotopic to the restriction of a continuous map from \(\Sigma\) to \(\manifold{N}\).
By assumption, there is a triangulation and \(V \in C \brk{\manifold{M}^{\floor{p}}, \manifold{N}}\) for which
\(u \restr{\partial \manifold{M}^{\floor{p - 1}}}\) is homotopic to \(V \restr{\partial \manifold{M}^{\floor{p - 1}}}\).
It follows  \cite{Bousquet_Ponce_VanSchaftingen}*{proof of Pr.\ 9.11} that if \(\Sigma\) is \(\floor{p}\)-dimensional simplicial complex, if \(\Sigma_0\) is a \(\floor{p - 1}\)-dimensional subcomplex, if \(\sigma \colon \Sigma \to \manifold{M}\) is Lipschitz-continuous and if \(\sigma \brk{\Sigma_0} \subseteq \partial \manifold{M}\), then there exists a Lipschitz-continuous map \(\Phi \colon \partial \manifold{M}^{\floor{p - 1}} \to \manifold{M}\) and a piecewise affine map \(\xi \colon \Sigma \to \manifold{M}^{\floor{p}}\) such that \(\xi \brk{\Sigma_0} \subseteq \partial \manifold{M}^{\floor{p - 1}}\)  and the maps
\(u \compose \sigma\restr{\Sigma_0}\) and \(u \compose \Phi \compose \xi\restr{\Sigma_0}\) are homotopic in \(\VMO \brk{\Sigma_0, \manifold{N}}\).
Since the map \(\xi\) is simplicial, the map
\(u \compose \Phi \compose \xi\restr{\Sigma_0}\) is homotopic to \(V \compose \xi\restr{\Sigma_0} = \brk{F \compose \xi}\restr{\Sigma_0}\) \cite{Bousquet_Ponce_VanSchaftingen}*{Cor.\ 9.10} and \ref{it_phue3au9shah5mo2Eexuu1se} holds.

In order to get the estimate, one observes in \cite{Mazowiecka_VanSchaftingen_2023} that the extension \(\manifold{M}\) is constructed by taking an extension \(U_0 \colon \sobolev^{1, p} \brk{\partial \manifold{M} \times \intvc{0}{1}, \manifold{N}}\) such that \(u = \tr_{\partial \manifold{M}\times \set{0}} U_0\) and \(u_1 \defeq \tr_{\partial \manifold{M}\times \set{1}} U_0 \in \sobolev^{1, p} \brk{\partial \manifold{M}, \manifold{N}}\); following \cite{VanSchaftingen_ExtensionTraces}, one can then find \(U_1 \in \sobolev^{1, p} \brk{\manifold{M}, \manifold{N}}\) such that \(\tr_{\partial \manifold{M}} U_1 = u_1\) and 
\[
   \int_{\manifold{M}} \abs{\Deriv U_1}^p
   \le \Theta \brk[\bigg]{\int_{\partial \manifold{M}} \abs{\Deriv u_1}^p}.
\]
The map \(U\) is then obtained by combining \(U_0\) and \(U_1\) in the collar neighbourhood of \(\partial \manifold{M}\) and satisfies the announced estimate.
\end{proof}

\begin{bibdiv}

\begin{biblist}

\bib{Abbondandolo_1996}{article}{
    author={Abbondandolo, Alberto},
    title={On the homotopy type of VMO},
    journal={Topol. Methods Nonlinear Anal.},
    volume={7},
    date={1996},
    number={2},
    pages={431--436},
    issn={1230-3429},
    %    review={\MR{1481705}},
    doi={10.12775/TMNA.1996.018},
}
\bib{Ball_Zarnescu_2011}{article}{
    author={Ball, John M.},
    author={Zarnescu, Arghir},
    title={Orientability and energy minimization in liquid crystal models},
    journal={Arch. Ration. Mech. Anal.},
    volume={202},
    date={2011},
    number={2},
    pages={493--535},
    issn={0003-9527},
}

\bib{Bethuel_1991}{article}{
    author={Bethuel, Fabrice},
    title={The approximation problem for Sobolev maps between two manifolds},
    journal={Acta Math.},
    volume={167},
    date={1991},
    number={3-4},
    pages={153--206},
    issn={0001-5962},
%     review={\MR{1120602}},
    doi={10.1007/BF02392449},
}

\bib{Bethuel_2014}{article}{
    author={Bethuel, Fabrice},
    title={A new obstruction to the extension problem for Sobolev maps between manifolds},
    journal={J. Fixed Point Theory Appl.},
    volume={15},
    date={2014},
    number={1},
    pages={155--183},
    issn={1661-7738},
    %     review={\MR{3282786}},
    doi={10.1007/s11784-014-0185-0},
}
    
\bib{Bethuel_Chiron_2007}{article}{
   author={Bethuel, Fabrice},
   author={Chiron, David},
   title={Some questions related to the lifting problem in Sobolev spaces},
   conference={
      title={Perspectives in nonlinear partial differential equations},
   },
   book={
      series={Contemp. Math.},
      volume={446},
      publisher={Amer. Math. Soc., Providence, RI},
   },
   isbn={978-0-8218-4190-7},
   date={2007},
   pages={125--152},
%    review={\MR{2373727}},
   doi={10.1090/conm/446/08628},
}

\bib{Bethuel_Demengel_1995}{article}{
    author={Bethuel, F.},
    author={Demengel, F.},
    title={Extensions for Sobolev mappings between manifolds},
    journal={Calc. Var. Partial Differential Equations},
    volume={3},
    date={1995},
    number={4},
    pages={475--491},
    issn={0944-2669},
    doi={10.1007/BF01187897},
}

\bib{Bourgain_Brezis_Mironescu_2001}{article}{
   author={Bourgain, Jean},
   author={Brezis, Haim},
   author={Mironescu, Petru},
   title={Lifting in Sobolev spaces},
   journal={J. Anal. Math.},
   volume={80},
   date={2000},
   pages={37--86},
   issn={0021-7670},
%    review={\MR{1771523}},
   doi={10.1007/BF02791533},
}

\bib{Bourgain_Brezis_Mironescu_2004}{article}{
   author={Bourgain, Jean},
   author={Brezis, Haim},
   author={Mironescu, Petru},
   title={$H^{1/2}$ maps with values into the circle: minimal connections,
   lifting, and the Ginzburg-Landau equation},
   journal={Publ. Math. Inst. Hautes \'Etudes Sci.},
   number={99},
   date={2004},
   pages={1--115},
   issn={0073-8301},
%    review={\MR{2075883}},
   doi={10.1007/s10240-004-0019-5},
}

\bib{Bousquet_Ponce_VanSchaftingen}{article}{
author={Bousquet, Pierre},
author={Ponce, Augusto},
author={Van Schaftingen, Jean},
title={Generic topological screening and approximation of
Sobolev maps},
note={in preparation},
}
\bib{Brezis_Li_2001}{article}{
    author={Brezis,Ha\"{\i}m},
    author={Li, Yanyan},
    title={Topology and Sobolev spaces},
    journal={J. Funct. Anal.},
    volume={183},
    date={2001},
    number={2},
    pages={321--369},
    issn={0022-1236},
    %   review={\MR{1844211}},
    doi={10.1006/jfan.2000.3736},
}
\bib{Brezis_Li_Mironescu_Nirenberg_1999}{article}{
   author={Brezis, Ha\"im},
   author={Li, Yanyan},
   author={Mironescu, Petru},
   author={Nirenberg, Louis},
   title={Degree and Sobolev spaces},
   journal={Topol. Methods Nonlinear Anal.},
   volume={13},
   date={1999},
   number={2},
   pages={181--190},
   issn={1230-3429},
%    review={\MR{1742219}},
   doi={10.12775/TMNA.1999.009},
}

\bib{Brezis_Mironescu_2001}{article}{
   author={Brezis, Ha\"{\i}m},
   author={Mironescu, Petru}  ,
   title={On some questions of topology for $S^1$-valued fractional Sobolev
   spaces},
   journal={RACSAM. Rev. R. Acad. Cienc. Exactas F\'is. Nat. Ser. A Mat.},
   volume={95},
   date={2001},
   number={1},
   pages={121--143},
   issn={1578-7303},
%    review={\MR{1899357}},
}

\bib{Brezis_Mironescu_2021}{book}{
    author={Brezis, Ha\"{\i}m},
    author={Mironescu, Petru},
    title={Sobolev maps to the circle},
    subtitle={From the perspective of analysis, geometry, and topology},
    series={Progress in Nonlinear Differential Equations and their
    Applications},
    volume={96},
    publisher={Birkh\"{a}user/Springer},
    address={New York},
    date={2021},
    pages={xxxi+530},
    isbn={978-1-0716-1510-2},
    isbn={978-1-0716-1512-6},
%     review={\MR{4390036}},
    doi={10.1007/978-1-0716-1512-6},
}

\bib{Brezis_Nirenberg_1995}{article}{
    author={Brezis, Ha\"{\i}m},
    author={Nirenberg, Louis},
    title={Degree theory and BMO},
    part={I}, 
    subtitle={Compact manifolds without boundaries},
    journal={Selecta Math. (N.S.)},
    volume={1},
    date={1995},
    number={2},
    pages={197--263},
    issn={1022-1824},
  %    review={\MR{1354598}},
    doi={10.1007/BF01671566},
}

\bib{Brezis_Nirenberg_1996}{article}{
    author={Brezis, Ha\"{\i}m},
    author={Nirenberg, Louis},
    title={Degree theory and BMO},
    part={II},
    subtitle={Compact manifolds with boundaries},
%     contribution={with an appendix by the authors and Petru Mironescu},
    journal={Selecta Math. (N.S.)},
    volume={2},
    date={1996},
    number={3},
    pages={309--368},
    issn={1022-1824},
  %    review={\MR{1422201}},
    doi={10.1007/BF01587948},
}

\bib{Eells_Lemaire_1978}{article}{
    author={Eells, J.},
    author={Lemaire, L.},
    title={A report on harmonic maps},
    journal={Bull. Lond. Math. Soc.},
    volume={10}, 
    pages={1--68},
    date={1978},
    doi={10.1112/blms/10.1.1},
}

\bib{Ericksen_Truesdell_1958}{article}{
    author={Ericksen, J. L.},
    author={Truesdell, C.},
    title={Exact theory of stress and strain in rods and shells},
    journal={Arch. Rational Mech. Anal.},
    volume={1},
    date={1958},
    pages={295--323},
    issn={0003-9527},
    %    review={\MR{99135}},
    doi={10.1007/BF00298012},
}

\bib{Ferry_Weinberger_2013}{article}{
   author={Ferry, Steve},
   author={Weinberger, Shmuel},
   title={Quantitative algebraic topology and Lipschitz homotopy},
   journal={Proc. Natl. Acad. Sci. USA},
   volume={110},
   date={2013},
   number={48},
   pages={19246--19250},
   issn={0027-8424},
   doi={10.1073/pnas.1208041110},
}

\bib{Fuglede_1957}{article}{
   author={Fuglede, Bent},
   title={Extremal length and functional completion},
   journal={Acta Math.},
   volume={98},
   date={1957},
   pages={171--219},
   issn={0001-5962},
%    review={\MR{0097720}},
   doi={10.1007/BF02404474},
}

\bib{Gagliardo_1957}{article}{
    author={Gagliardo, Emilio},
    title={Caratterizzazioni delle tracce sulla frontiera relative ad alcune
    classi di funzioni in $n$ variabili},
  %    language={Italian},
    journal={Rend. Sem. Mat. Univ. Padova},
    volume={27},
    date={1957},
    pages={284--305},
    issn={0041-8994},
  %    review={\MR{0102739}},
}

\bib{Hang_Lin_2003_II}{article}{
    author={Hang, Fengbo},
    author={Lin, Fanghua},
    title={Topology of Sobolev mappings},
    part={II},
    journal={Acta Math.},
    volume={191},
    date={2003},
    number={1},
    pages={55--107},
    issn={0001-5962},
%     review={\MR{2020419}},
    doi={10.1007/BF02392696},
}

\bib{Hardt_Lin_1987}{article}{
    author={Hardt, Robert},
    author={Lin, Fang-Hua},
    title={Mappings minimizing the \(L^p\) norm of the gradient},
    journal={Comm. Pure Appl. Math.},
    volume={40},
    date={1987},
    number={5},
    pages={555--588},
    issn={0010-3640},
    doi={10.1002/cpa.3160400503},
    %    review={\MR{896767}},
}

\bib{Helein_Wood_2008}{article}{
   author={H\'elein, Fr\'ed\'eric},
   author={Wood, John C.},
   title={Harmonic maps},
   conference={
      title={Handbook of global analysis},
   },
   book={
      publisher={Elsevier},
      place={Amsterdam},
   },
   isbn={978-0-444-52833-9},
   date={2008},
   pages={417--491, 1213},
%    review={\MR{2389639}},
   doi={10.1016/B978-044452833-9.50009-7},
}

\bib{Huang_Tong_Wei_Bao_2011}{article}{
  title={Boundary aligned smooth 3D cross-frame field},
  author={Huang, Jin },
  author={Tong, Yiying },
  author={Wei, Hongyu},
  author={Bao, Hujun},
  journal={ACM Transactions on Graphics},
  volume={30},
  date={2011},
  doi={10.1145/2070781.2024177},
}

\bib{Isobe_2003}{article}{
    author={Isobe, Takeshi},
    title={Obstructions to the extension problem of Sobolev mappings},
    journal={Topol. Methods Nonlinear Anal.},
    volume={21},
    date={2003},
    number={2},
    pages={345--368},
    issn={1230-3429},
    doi={10.12775/TMNA.2003.021},
}

\bib{Leoni_2017}{book}{
    author={Leoni, Giovanni},
    title={A first course in Sobolev spaces},
    series={Graduate Studies in Mathematics},
    volume={181},
    edition={2},
    publisher={American Mathematical Society}, 
    address={Providence, R.I.},
    date={2017},
    pages={xxii+734},
    doi = {10.1090/gsm/181},
    isbn={978-1-4704-2921-8},
%       review={\MR{3726909}},
}

\bib{Mazowiecka_VanSchaftingen_2023}{article}{
    author={Mazowiecka, Katarzyna},
    author={Van Schaftingen, Jean},
    title={Quantitative characterization of traces of Sobolev maps},
    journal={Commun. Contemp. Math.},
    volume={25},
    date={2023},
    number={2},
    pages={Paper No. 2250003, 31},
    issn={0219-1997},
    doi={10.1142/S0219199722500031},
}

\bib{Mermin1979}{article}{
    author={Mermin, N. D.},
    title={The topological theory of defects in ordered media},
    journal={Rev. Modern Phys.},
    volume={51},
    date={1979},
%     number={3},
    pages={591--648},
    issn={0034-6861},
%    review={\MR{541885 (83f:82042)}},
    doi={10.1103/RevModPhys.51.591},
}

\bib{Mironescu_VanSchaftingen_2021_APDE}{article}{
    author={Mironescu, Petru},
    author={Van Schaftingen, Jean},
    title={Lifting in compact covering spaces for fractional Sobolev
    mappings},
    journal={Anal. PDE},
    volume={14},
    date={2021},
    number={6},
    pages={1851--1871},
    issn={2157-5045},
  %       review={\MR{4308667}},
    doi={10.2140/apde.2021.14.1851},
}

\bib{Mironescu_VanSchaftingen_2021_AFST}{article}{
    author={Mironescu, Petru},
    author={Van Schaftingen, Jean},
    title={Trace theory for Sobolev mappings into a manifold},
%     language={English, with English and French summaries},
    journal={Ann. Fac. Sci. Toulouse Math. (6)},
    volume={30},
    date={2021},
    number={2},
    pages={281--299},
    issn={0240-2963},
    doi={10.5802/afst.1675},
}

\bib{Monteil_VanSchaftingen_2019}{article}{
   author={Monteil, Antonin},
   author={Van Schaftingen, Jean},
   title={Uniform boundedness principles for Sobolev maps into manifolds},
   journal={Ann. Inst. H. Poincar\'e{} C Anal. Non Lin\'eaire},
   volume={36},
   date={2019},
   number={2},
   pages={417--449},
   issn={0294-1449},
%    review={\MR{3913192}},
   doi={10.1016/j.anihpc.2018.06.002},
}

\bib{Nash_1956}{article}{
   author={Nash, John},
   title={The imbedding problem for Riemannian manifolds},
   journal={Ann. of Math. (2)},
   volume={63},
   date={1956},
   pages={20--63},
   issn={0003-486X},
   doi={10.2307/1969989},
}

\bib{Schoen_Uhlenbeck_1982}{article}{
    author={Schoen, Richard},
    author={Uhlenbeck, Karen},
    title={A regularity theory for harmonic maps},
    journal={J. Differential Geom.},
    volume={17},
    date={1982},
    number={2},
    pages={307--335},
    doi={10.4310/jdg/1214436923},
    issn={0022-040X},
}

\bib{Schoen_Uhlenbeck_1983}{article}{
    author={Schoen, Richard},
    author={Uhlenbeck, Karen},
    title={Boundary regularity and the Dirichlet problem for harmonic maps},
    journal={J. Differential Geom.},
    volume={18},
    date={1983},
    number={2},
    pages={253--268},
    issn={0022-040X},
    doi={10.4310/jdg/1214437663},
}

\bib{VanSchaftingen_ExtensionTraces}{article}{
  author={Van Schaftingen, Jean},
  title={The extension of traces for Sobolev mappings between manifolds},
  eprint={arXiv:2403.18738},
}

\bib{White_1988}{article}{
    author={White, Brian},
    title={Homotopy classes in Sobolev spaces and the existence of energy
    minimizing maps},
    journal={Acta Math.},
    volume={160},
    date={1988},
    number={1-2},
    pages={1--17},
    issn={0001-5962},
%     review={\MR{0926523}},
    doi={10.1007/BF02392271},
}

  \end{biblist}

\end{bibdiv}

\end{document}